\newtheorem{theorem}{Theorem}[section]
\newtheorem{proposition}[theorem]{Proposition}
\newtheorem{lemma}[theorem]{Lemma}
\newtheorem{corollary}[theorem]{Corollary}
\theoremstyle{definition}
\newtheorem{example}[theorem]{Example}
\newtheorem{problem}[theorem]{Problem}
\newtheorem{question}[theorem]{Question}
\newtheorem{remark}[theorem]{Remark}
\newcommand{\ZZ}{ \ensuremath{\mathbb{Z}}}
\newcommand{\RR}{ \ensuremath{\mathbb{R}}}
\newcommand{\Tor}{\ensuremath{\mathrm{Tor}}\hspace{1pt}}
\def\cocoa{{\hbox{\rm C\kern-.13em o\kern-.07em C\kern-.13em o\kern-.15em A}}}
\newcommand{\mult}{\delta}
\newcommand{\type}{\mathrm{type}}
\newcommand{\cost}{\mathrm{cost}}
\newcommand{\aaa}{\mathbf{a}}
\newcommand{\cano}{\omega}
\newcommand{\supp}{\mathrm{supp}}
\newcommand{\ee}{\mathbf{e}}
\newcommand{\lk}{\mathrm{lk}}
\newcommand{\coker}{\ensuremath{\mathrm{coker}}\hspace{1pt}}
\begin{document}

\title[Uniformly Cohen-Macaulay simplicial complexes]{Uniformly Cohen--Macaulay simplicial complexes
and almost Gorenstein* simplicial complexes}

\author{Naoyuki Matsuoka}
\address{
Naoyuki Matsuoka,
Department of Mathematics,
School of Science and Technology,
Meiji University,
1-1-1 Higashi-mita, Tama-ku, Kawasaki-shi, Kanagawa 214-8571, Japan.
}
\email{naomatsu@meiji.ac.jp}

\author{Satoshi Murai}
\address{
Satoshi Murai,
Department of Pure and Applied Mathematics,
Graduate School of Information Science and Technology,
Osaka University, Suita, Osaka 565-0871, Japan.
}
\email{s-murai@ist.osaka-u.ac.jp}

\thanks{
The first author was partially supported by JSPS KAKENHI 26400054.
The second author was partially supported by JSPS KAKENHI 25400043.}


\begin{abstract}
In this paper, we study simplicial complexes whose Stanley--Reisner rings are almost Gorenstein and have $a$-invariant zero.
We call such a simplicial complex an almost Gorenstein* simplicial complex.
To study the almost Gorenstein* property, we introduce a new class of simplicial complexes which we call uniformly Cohen--Macaulay simplicial complexes.
A $d$-dimensional simplicial complex $\Delta$ is said to be uniformly Cohen--Macaulay
if it is Cohen--Macaulay and, for any facet $F$ of $\Delta$, the simplicial complex $\Delta \setminus\{F\}$ is Cohen--Macaulay of dimension $d$.
We investigate fundamental algebraic, combinatorial and topological properties of these simplicial complexes,
and show that almost Gorenstein* simplicial complexes must be uniformly Cohen--Macaulay. By using this fact, we show that every almost Gorenstein* simplicial complex can be decomposed into those of having one dimensional top homology. Also, we give a combinatorial criterion of the almost Gorenstein* property for simplicial complexes of dimension $\leq 2$.
\end{abstract}

\maketitle

\section{Introduction}
In this paper, we study the almost Gorenstein property of Stanley--Reisner rings.
Let $K$ be a field, $R=\bigoplus_{k \in \ZZ} R_k$ a graded $K$-algebra of Krull dimension $\geq 1$, and let $\omega_R$ be its canonical module.
The algebra $R$ is said to be {\em almost Gorenstein} if 
$R$ is Cohen--Macaulay and there is a short exact sequence of degree $0$
\begin{align}
\label{1-1}
0 \longrightarrow R \stackrel{\phi}{\longrightarrow} \cano_R (-a) \longrightarrow C=\coker \phi \longrightarrow 0
\end{align}
such that the multiplicity of $C$ is equal to the number of minimal generators of $C$,
where $a=-\min\{k: (\cano_R)_k \ne 0\}$ is the {\em $a$-invariant} of $R$
and where $M(k)$ denotes the graded module $M$ with grading shifted by degree $k$.
Note that if $R$ is Gorenstein, then it is almost Gorenstein and $C$ is the zero module.

The almost Gorenstein property was first introduced by Barucci and Fr\"oberg \cite{BF} for $1$-dimensional local rings which are analytically unramified.
Then the definition was extended to all $1$-dimensional local rings by Goto--Matsuoka--Phuong \cite{GMP} and to all local and graded rings by Goto--Takahashi--Taniguchi \cite{GTT}. 
Since the almost Gorenstein property for graded rings was introduced very recently,
not many are known on this property and it is expected to understand what kinds of rings are almost Gorenstein and to find various examples of almost Gorenstein rings.
In this paper, we address this problem for Stanley--Reisner rings.

We first recall basics on simplicial complexes and Stanley--Reisner rings.
Let $\Delta$ be a simplicial complex on $[n]=\{1,2,\dots,n\}$.
Thus $\Delta$ is a collection of subsets of $[n]$ such that,
for any $F \in \Delta$ and $G \subset F$, one has $G \in \Delta$.
Elements of $\Delta$ are called {\em faces} of $\Delta$
and the maximal faces (under inclusion) are called {\em facets} of $\Delta$.
The {\em dimension} of $\Delta$ is the integer
$\dim \Delta=\max \{\# F: F \in \Delta\} -1$,
where $\#X$ denotes the cardinality of a finite set $X$.
For every simplicial complex $\Delta$ on $[n]$,
we write $|\Delta|$ for its natural geometric realization which is the union of
convex hulls of $\{\ee_i: i \in F\}$ for all $F \in \Delta$, where $\ee_i$ is the $i$th unit vector of $\RR^n$.
Next, we recall Stanley--Reisner rings.
Let $K$ be an infinite field and $S=K[x_1,\dots,x_n]$ the polynomial ring with each $\deg x_i=1$.
For a simplicial complex $\Delta$ on $[n]$,
its {\em Stanley--Reisner ring} (over $K$) is the quotient ring
$$K[\Delta]=S/(x_F: F \subset [n],\ F \not \in \Delta),$$
where $x_F= \prod _{i \in F} x_i$.
We say that a simplicial complex $\Delta$ is {\em Cohen--Macaulay} (CM for short)
over $K$ if $K[\Delta]$ is a Cohen--Macaulay ring \cite{BH}.
The CM property depends on the base field $K$,
but we will omit information on $K$ when it is clear.

To study the almost Gorenstein property, it is convenient to know which graded $K$-algebra $R$ admits the short exact sequence \eqref{1-1}, that is, when there is an injection $R \to \omega_R(-a)$. If $R$ is a domain, then it is clear that such an injection exists. However, if $R$ is not a domain, then the existence of such an injection is non-trivial. The first main result of this paper is a simple combinatorial criterion of simplicial complexes whose Stanley--Reisner rings admit such an injection with $a=0$.

For a simplicial complex $\Delta$ on $[n]$ and a face $F \in \Delta$,
the simplicial complex
$$\cost_\Delta(F)=\{G \in \Delta: G \not \supset F\}$$
is called the {\em contrastar} of $F$ in $\Delta$.
We say that a $d$-dimensional simplicial complex $\Delta$ is {\em uniformly Cohen--Macaulay}
if $\Delta$ is CM and, for any facet $F \in \Delta$,
$\cost_\Delta(F)=\Delta \setminus \{F\}$ is CM of dimension $d$.
(We note that if $\Delta$ is CM and $F$ is its facet, then $\cost_\Delta(F)$ has dimension $d$ unless $\Delta$ is a simplex.)
For topological spaces (or simplicial complexes) $X \supset Y$,
we write $H_k(X)$ and $H_k(X,Y)$ for the $k$th homology group
of $X$ and of the pair $(X,Y)$ with coefficients in $K$, respectively.

\begin{theorem}
\label{main1}
Let $\Delta$ be a CM simplicial complex on $[n]$ of dimension $d-1 \geq 1$.
The following conditions are equivalent.
\begin{itemize}
\item[(i)] $\Delta$ is uniformly CM.
\item[(ii)] There is an injective $S$-homomorphism $\phi: K[\Delta] \to \omega_{K[\Delta]}$ of degree $0$.
\item[(iii)] For any facet $F \in \Delta$, $\dim_K H_{d-1}(\Delta) > \dim_K H_{d-1}(\cost_\Delta(F))$.
\item[(iv)] For any point $p \in |\Delta|$, the map $\imath^*: H_{d-1}(|\Delta|) \to H_{d-1}(|\Delta|,|\Delta|-p)$ induced by the inclusion $\imath:|\Delta| -p \to |\Delta|$
is non-zero.
\end{itemize}
\end{theorem}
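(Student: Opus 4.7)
The plan is to prove (i)$\Leftrightarrow$(iii), (ii)$\Leftrightarrow$(iii) and (iii)$\Leftrightarrow$(iv). The main algebraic tool is the short exact sequence of graded $K[\Delta]$-modules
\[
0 \longrightarrow K[\st_\Delta(F)](-d) \stackrel{\cdot x_F}{\longrightarrow} K[\Delta] \longrightarrow K[\cost_\Delta(F)] \longrightarrow 0
\]
associated to a facet $F$: since $F$ is a facet, $\st_\Delta(F)=2^F$ is a $(d-1)$-simplex, so $K[\st_\Delta(F)]$ is a polynomial ring in $d$ variables.

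For (i)$\Leftrightarrow$(iii), I apply $\Ext_S^\bullet(-,S(-n))$. Using the CM hypothesis on $K[\Delta]$, the resulting long exact sequence collapses to
\[
0 \to \omega_{K[\cost_\Delta(F)]} \to \omega_{K[\Delta]} \stackrel{\psi_F}{\longrightarrow} K[\st_\Delta(F)] \to \Ext_S^{n-d+1}(K[\cost_\Delta(F)],S(-n)) \to 0,
\]
so $\cost_\Delta(F)$ is CM of dimension $d-1$ iff $\psi_F$ is surjective. Since $K[\st_\Delta(F)]$ is cyclic in degree $0$ as a $K[\Delta]$-module, surjectivity of $\psi_F$ is equivalent to $(\psi_F)_0\neq 0$. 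Via Hochster's formula for $(\omega_{K[\Delta]})_0$ and the excision isomorphism $H_{d-1}(\Delta,\cost_\Delta(F))\cong H_{d-1}(F,\partial F)=K$, the map $(\psi_F)_0$ is identified with the topological restriction $\tilde H_{d-1}(\Delta)\to\tilde H_{d-1}(\Delta,\cost_\Delta(F))$; its nonvanishing is equivalent via the long exact sequence of the pair to the strict inequality in (iii).

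For (ii)$\Leftrightarrow$(iii), a degree-$0$ map $\phi:K[\Delta]\to\omega_{K[\Delta]}$ is given by $r\mapsto ru$ for $u=\phi(1)\in(\omega_{K[\Delta]})_0$. Since $K[\Delta]$ is reduced and CM with minimal primes $P_F=(x_i:i\notin F)$, $\phi$ is injective iff $u$ is nonzero in $(\omega_{K[\Delta]})_{P_F}$ for every facet $F$. Localizing the displayed Ext sequence at $P_F$ kills both outer terms (they are annihilated by $x_F\notin P_F$), so $(\psi_F)_{P_F}$ is an isomorphism of one-dimensional vector spaces over $\mathrm{Frac}(K[F])$, and hence $u\ne 0$ in the localization iff $(\psi_F)_0(u)\ne 0$. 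Therefore (ii) amounts to the existence of $u\in(\omega_{K[\Delta]})_0$ with $(\psi_F)_0(u)\ne 0$ for every facet $F$. The direction (ii)$\Rightarrow$(iii) is immediate; for (iii)$\Rightarrow$(ii), each $\ker(\psi_F)_0$ is a proper subspace of the finite-dimensional space $(\omega_{K[\Delta]})_0$, so since $K$ is infinite one may pick $u$ outside the finite union of these subspaces.

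For (iii)$\Leftrightarrow$(iv), if $p$ lies in the relative interior of a face $F$, then $|\Delta|-p$ deformation retracts onto $|\cost_\Delta(F)|$, which gives $H_{d-1}(|\Delta|,|\Delta|-p)\cong H_{d-1}(\Delta,\cost_\Delta(F))$. For any facet $F'\supseteq F$, the inclusion $\cost_\Delta(F')\subseteq\cost_\Delta(F)$ yields a commutative square of long exact sequences, so (iii) applied to $F'$ forces the map for $F$ to be nonzero; conversely, taking $p$ interior to a facet recovers (iii) from (iv). The main obstacle will be the identification step in (i)$\Leftrightarrow$(iii): verifying that the algebraic connecting map $(\psi_F)_0$ coming from the Ext long exact sequence really coincides with the topological restriction $\tilde H_{d-1}(\Delta)\to\tilde H_{d-1}(\Delta,\cost_\Delta(F))$, which requires carefully unwinding Hochster's formula and checking its naturality with respect to the short exact sequence above.
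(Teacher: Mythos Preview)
Your plan is correct, and for (ii)$\Leftrightarrow$(iii)$\Leftrightarrow$(iv) it is essentially the paper's argument in different clothing: the paper invokes Gr\"abe's description of the multiplication $\times x_F:(\omega_{K[\Delta]})_0\to(\omega_{K[\Delta]})_{\ee_F}$ as the map $\imath^*$, together with the squarefree-module observation that a degree-$0$ injection $K[\Delta]\to\omega_{K[\Delta]}$ exists iff some $u\in(\omega_{K[\Delta]})_0$ satisfies $x_Fu\neq 0$ for every facet $F$; your localization at the $P_F$ and the identification via the Ext sequence encode the same thing.

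Where you genuinely diverge is in (i)$\Leftrightarrow$(iii). The paper argues topologically and by induction: it applies the Mayer--Vietoris sequence of $\Delta=\langle F\rangle\cup\cost_\Delta(F)$ to control $\widetilde H_*(\cost_\Delta(F))$, then reduces the CM-ness of $\cost_\Delta(F)$ via Reisner's criterion to the CM-ness of its vertex links, which are computed as either $\lk_\Delta(v)$ or $\cost_{\lk_\Delta(v)}(F\setminus\{v\})$; a separate lemma shows that condition (iii) passes to links, allowing induction on $d$. Your single short exact sequence $0\to K[F](-d)\to K[\Delta]\to K[\cost_\Delta(F)]\to 0$ and the resulting Ext sequence bypass the induction and the link lemma entirely: CM-ness of $\cost_\Delta(F)$ becomes the surjectivity of $\psi_F$, which, since the target is cyclic in degree $0$, reduces to $(\psi_F)_0\neq 0$. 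This is more streamlined. One small simplification: for (i)$\Leftrightarrow$(iii) you do not need the full identification of $(\psi_F)_0$ with $\imath^*$. It suffices to know that in degree $0$ the first map of your Ext sequence is the inclusion-induced map $H_{d-1}(\cost_\Delta(F))\hookrightarrow H_{d-1}(\Delta)$ (naturality of Gr\"abe/Hochster); since the cokernel maps to the one-dimensional space $K[F]_0$, nonvanishing of $(\psi_F)_0$ is then equivalent to the strict inequality in (iii) without ever naming $\imath^*$.
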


The uniformly CM property is a new property of simplicial complexes and is itself an interesting property.
Since the CM property is a topological property,
the fourth condition of the theorem shows that
the uniformly CM property is a topological property,
that is, only depends on the geometric realization and a base field $K$.
The second condition implies an interesting enumerative consequence.
It implies that if $(h_0,h_1,\dots,h_d)$ is the $h$-vector of the uniformly CM simplicial complex,
then we have $\sum_{i=0}^k h_{d-i} \geq \sum_{i=0}^k h_i$ for all $k$.
This is an analogue of Stanley's inequality for Ehrhart $\delta$-vectors \cite{St2}.
Also, the third condition gives a simple way to check the uniformly CM property.

The definition of the uniformly CM property is similar to that of
the $2$-CM property introduced by Baclawski \cite{Ba}.
A $d$-dimensional simplicial complex $\Delta$ is said to be $2$-CM if,
for every vertex $v$ of $\Delta$, $\cost_\Delta(v)$ is CM of dimension $d$.
In particular,
for $1$-dimensional simplicial complexes,
the 2-CM and uniformly CM property are the 2-vertex and 2-edge connectivity of graphs.
As we see later, it is not hard to see that $2$-CM simplicial complexes are uniformly CM (Proposition \ref{3.4}).
These fundamental properties of uniformly CM simplicial complexes
will be discussed in the first part of the paper.

In the latter half of the paper, we study the almost Gorenstein property of Stanley--Reisner rings.
We are interested in the following special class of almost Gorenstein Stanley--Reisner rings.
We say that a simplicial complex $\Delta$ is {\em almost Gorenstein*} (over $K$)
if $K[\Delta]$ is almost Gorenstein and $\widetilde H_{\dim \Delta}(\Delta) \ne 0$,
where $\widetilde H_k(\Delta)$ is the $k$th reduced homology group of $\Delta$ with coefficients in $K$.
These are natural generalizations of Gorenstein* simplicial complexes which are simplicial complexes $\Delta$ such that $K[\Delta]$ is Gorenstein and $\widetilde H_{\dim \Delta}(\Delta) \ne 0$.
Since this non-vanishing condition on the homology implies that the $a$-invariant of $K[\Delta]$ is zero by Hochster's formula \cite[II Theorem 4.1]{St},
Theorem \ref{main1} and the existence of a short exact sequence \eqref{1-1} say that almost Gorenstein* simplicial complexes are uniformly CM.

We say that an almost Gorenstein* simplicial complex $\Delta$ is {\em indecomposable} if $\widetilde H_{\dim \Delta}(\Delta)\cong K$.
Our main result is that every almost Gorenstein* simplicial complex can be decomposed into indecomposable ones in the following sense.
Let $\Delta,\Gamma$ and $\Sigma$ be $(d-1)$-dimensional simplicial complexes.
We say that $\Delta$ is the {\em ridge sum} of $\Gamma$ and $\Sigma$
if $\Delta=\Gamma \cup \Sigma$ and $\Gamma \cap \Sigma$ is a $(d-2)$-dimensional simplex.
As we prove later,
if $\Delta$ is the ridge sum of $\Gamma$ and $\Sigma$,
then $\Delta$ is almost Gorenstein* if and only if $\Gamma$ and $\Sigma$ are almost Gorenstein* (Lemma \ref{4.5}).

\begin{theorem}
\label{main2}
Let $\Delta$ be an almost Gorenstein* simplicial complex of dimension $d-1 \geq 1$.
If $\Delta$ is not indecomposable,
then there are $(d-1)$-dimensional almost Gorenstein* simplicial complexes $\Gamma$ and $\Sigma$ such that
$\Delta$ is a ridge sum of $\Gamma$ and $\Sigma$.
\end{theorem}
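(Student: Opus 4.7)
\emph{Strategy.} The plan is to produce a $(d-2)$-face $T$ of $\Delta$ and a non-trivial partition of the facets of $\Delta$ across $T$; by Lemma~\ref{4.5}, the almost Gorenstein* property of the resulting two pieces then follows automatically from that of $\Delta$, so the task reduces to exhibiting some ridge decomposition.

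\emph{Algebraic setup.} My first step is to apply Theorem~\ref{main1} together with the almost Gorenstein* hypothesis to obtain an injection $\phi\colon K[\Delta]\hookrightarrow \omega_{K[\Delta]}$ of degree $0$ whose cokernel $C$ satisfies $\mu(C)=e(C)$. Under the Hochster identification $(\omega_{K[\Delta]})_0 \cong \widetilde H_{d-1}(\Delta;K)$, the image $\eta := \phi(1)$ is a fully supported top cycle. Since $\dim_K \widetilde H_{d-1}(\Delta) \ge 2$ by our assumption that $\Delta$ is not indecomposable, $C \ne 0$; additivity of multiplicity in the short exact sequence $0 \to K[\Delta] \to \omega_{K[\Delta]} \to C \to 0$, combined with $e(\omega_{K[\Delta]}) = e(K[\Delta])$, forces $\dim C = d-1$, so $C$ is a Cohen--Macaulay $K[\Delta]$-module of dimension $d-1$.

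\emph{Locating the separating ridge, the main obstacle.} Next, I would use the Ulrich-type equality $\mu(C) = e(C)$ to extract a $(d-2)$-face $T$ of $\Delta$ and a partition of the facet set $\mathcal F(\Delta) = \mathcal F_1 \sqcup \mathcal F_2$ into non-empty parts such that $F \cap F' \subseteq T$ whenever $F \in \mathcal F_1$ and $F' \in \mathcal F_2$. The plan is to compare the multigraded decomposition of $\omega_{K[\Delta]}$ coming from Hochster's formula, in which each piece is controlled by the reduced homology of the link of a face, against the rigidity imposed by $\mu(C) = e(C)$: if no such separator existed, then too many faces of $\Delta$ would contribute independent minimal generators to $C$, forcing $\mu(C) > e(C)$. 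This is where I expect the real difficulty of the proof to lie, and it is the only step that genuinely uses the almost Gorenstein hypothesis beyond uniform Cohen--Macaulayness (which was already assembled via Theorem~\ref{main1}).

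\emph{Finishing.} Once $T$ and the partition are in hand, I set $\Gamma = \langle \mathcal F_1 \rangle$ and $\Sigma = \langle \mathcal F_2 \rangle$, the subcomplexes of $\Delta$ generated by the two facet families. By construction $\Delta = \Gamma \cup \Sigma$ and $\Gamma \cap \Sigma = \langle T \rangle$, so $\Delta$ is the ridge sum of $\Gamma$ and $\Sigma$, and both are pure of dimension $d-1$. The Mayer--Vietoris sequence, together with the contractibility of the simplex $\langle T \rangle$, yields $\widetilde H_{d-1}(\Delta) \cong \widetilde H_{d-1}(\Gamma) \oplus \widetilde H_{d-1}(\Sigma)$, and both summands are non-zero because $\eta$ has non-zero components on facets from each part. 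Lemma~\ref{4.5} then delivers that $\Gamma$ and $\Sigma$ are almost Gorenstein*, completing the argument.
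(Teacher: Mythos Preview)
Your overall strategy---find a $(d-2)$-face that separates the facets, then invoke Lemma~\ref{4.5}---matches the paper's, and your first and last paragraphs are fine. The gap is exactly where you flag it: the ``Locating the separating ridge'' step is not a proof but a plan, and the plan as stated (``if no such separator existed, then too many faces of $\Delta$ would contribute independent minimal generators to $C$, forcing $\mu(C)>e(C)$'') is not easy to make precise in the form you suggest, because you are trying to count generators of $C$ via the Gr\"abe description of $\omega_{K[\Delta]}$, which controls graded pieces of $\omega$ but not directly those of the quotient $C$.

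The paper closes this gap by passing from $C$ to the last Tor module of $K[\Delta]$. The Ulrich condition $\mu(C)=e(C)$ forces $C/\Theta C\cong \Tor_0^S(C,K)$ for a linear system of parameters $\Theta$, and since $\Tor_0^S(C,K)_k\cong \Tor_{n-d}^S(K[\Delta],K)_{n-k}$ for $k\geq 1$, the Hilbert series of $C$ computed in \eqref{2-5} yields
\[
\dim_K \Tor_{n-d}^S(K[\Delta],K)_{n-d+1}=h_d(\Delta)-1=\dim_K\widetilde H_{d-1}(\Delta)-1>0.
\]
Now one applies Hochster's formula for Tor (Lemma~\ref{4.4}), not the Gr\"abe description of $\omega$: the nonvanishing of this Tor group in multidegree $\ee_F$ with $\#F=n-d+1$ means $\widetilde H_0(\Delta|_F)\ne 0$, i.e.\ there is a set $W=[n]\setminus F$ of size $d-1$ with $\Delta|_{[n]\setminus W}$ disconnected. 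This $W$ is your separating ridge; the two pieces $\Gamma$, $\Sigma$ are the restrictions of $\Delta$ to $V\cup W$ and $V^c\cup W$, where $V$, $V^c$ are the vertex sets of a connected component and its complement. One must still check $W\in\Delta$ (so that $\Gamma\cap\Sigma=\langle W\rangle$ is genuinely a $(d-2)$-simplex); the paper does this using strong connectivity of the CM complex $\Delta$, a point your ``Finishing'' paragraph glosses over.
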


The above theorem says that every almost Gorenstein* simplicial complex is a ridge sum of (many) indecomposable ones.
Thus, to study the almost Gorenstein* simplicial complexes,
it is essential to consider indecomposable ones.
This fact gives some insight on almost Gorenstein* simplicial complexes of dimension $\leq 2$.
It is not hard to see that $1$-dimensional indecomposable almost Gorenstein* simplicial complexes are Gorenstein* (in particular, they are cycles).
By using this,
we classify almost Gorenstein Stanley--Reisner rings of Krull dimension $2$ (Proposition \ref{4.7}).
We also show that, in two dimensional case, the indecomposable almost Gorenstein* simplicial complexes are exactly the uniformly CM simplicial complexes $\Delta$ with $\widetilde H_2(\Delta)\cong K$ (Proposition \ref{4.8}).
This provides a combinatorial criterion of the $2$-dimensional almost Gorenstein* simplicial complexes by Theorems \ref{main1} and \ref{main2}.

\section{Uniformly Cohen--Macaulay simplicial complexes}

In this section, we study the uniformly CM property.
We first introduce necessary notations.
Throughout the paper,
$S=K[x_1,\dots,x_n]$ is the polynomial ring over an infinite field $K$.
Let $\ee_i \in \ZZ^n$ be the $i$th unit vector of $\ZZ^n$.
We consider the $\ZZ^n$-grading of $S$ defined by $\deg x_i = \ee_i$.
For a $\ZZ^n$-graded $S$-module $M$ and for $\aaa=(a_1,\dots,a_n) \in \ZZ^n$,
let $M_\aaa$ be the graded component of $M$ of degree $\aaa$
and let $\supp(\aaa)=\{i \in [n]: a_i \ne 0\}$.
For a subset $F \subset [n]$,
we write $\ee_F= \sum_{i \in F} \ee_i$.

The key idea to prove Theorem \ref{main1} is Gr\"abe's structure theorem for canonical modules of Stanley--Reisner rings.
We quickly recall Gr\"abe's result.
Fix a simplicial complex $\Delta$ of dimension $d-1 \geq 1$.
For faces $G \subset F \in \Delta$,
let
$$\imath^*: H_{d-1}(\Delta,\cost_\Delta(G)) \to H_{d-1}(\Delta,\cost_\Delta(F))$$
be the map 
appearing in the long exact sequence of the triple $(\Delta,\cost_\Delta(F),\cost _\Delta(G))$.
Gr\"abe \cite[Theorem 4]{Gr} proved the isomorphism
\begin{align}
\label{2-1}
\omega_{K[\Delta]} \cong \bigoplus_{\aaa \in \ZZ^n_{\geq 0},\ \supp(\aaa) \in \Delta} H_{d-1}\big(\Delta,\cost_\Delta\big(\supp(\aaa) \big) \big),
\end{align}
where the $\aaa$th graded component of the right-hand side is $H_{d-1}(\Delta,\cost_\Delta(\supp(\aaa)))$
and where we consider that $H_{d-1}(\Delta,\cost_\Delta(\supp(\aaa) ))= H_{d-1}(\Delta)$ if $\supp(\aaa)=\emptyset$,
and determined the multiplication structure of the right-hand side as follows.

\begin{lemma}[Gr\"abe]
\label{2.1}
With the same notation as above,
the multiplication of $x_l$ from the $\aaa$th component of the right-hand side of \eqref{2-1} to its $(\aaa+\ee_l)$th component is the following map
\begin{align*} 
\small
\begin{cases}
\mbox{$0$-map}, & \mbox{if }\supp(\aaa +\ee_l) \not \in \Delta,\\
\mbox{identity map}, & \mbox{if } l \in \supp(\aaa) \in \Delta,\\
\imath^* : H_{d-1}(\Delta,\cost_\Delta(\supp(\aaa ))) \to H_{d-1}(\Delta,\cost_\Delta(\supp(\aaa+\ee_l))),&
\mbox{otherwise}.
\end{cases}
\end{align*}
\end{lemma}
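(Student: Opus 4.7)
The plan is to derive Gr\"abe's multiplication rule from the Matlis dual description of $\omega_{K[\Delta]}$, which is the same source that produces the isomorphism \eqref{2-1} itself. Recall that $\omega_{K[\Delta]}$ is, up to a grading shift by $-\mathbf{1}$, the graded $K$-dual of the top local cohomology $H^d_{\mideal}(K[\Delta])$, and the latter is computed as the top cohomology of the \v{C}ech complex $\check{C}^\bullet(x_1,\dots,x_n;K[\Delta])$. First I would recall how Hochster's formula identifies the $(-\aaa)$-graded piece of this local cohomology, for $\supp(\aaa) \in \Delta$, with the $K$-dual of a reduced simplicial homology group of a subcomplex of $\Delta$ associated to $\supp(\aaa)$; the long exact sequence of the pair $(\Delta, \cost_\Delta(\supp \aaa))$ then rewrites this dual as $H_{d-1}(\Delta, \cost_\Delta(\supp \aaa))$, which is the $\aaa$-summand on the right-hand side of \eqref{2-1}.

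Second, I would track multiplication by $x_l$ through this identification. On the \v{C}ech complex, $x_l$ acts as a degree shift compatible with the \v{C}ech face maps, so after Matlis dualization it induces a map between the relative homology summands whose effect must be checked case by case. When $\supp(\aaa + \ee_l) \notin \Delta$, the target summand in the decomposition is zero and the map is zero for trivial reasons. When $l \in \supp(\aaa)$, so that $\supp(\aaa + \ee_l) = \supp(\aaa)$, source and target are canonically the same summand; here the map is the identity because multiplication by $x_l$ acts invertibly after localizing at $x_l$, and this localization respects the decomposition on the piece indexed by supports containing $l$.

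The substantive case is $l \notin \supp(\aaa)$ with $\supp(\aaa) \cup \{l\} \in \Delta$, where the support strictly enlarges. I would unwind the \v{C}ech map and verify that, after duality, it realizes the map on relative homology induced by the inclusion of pairs coming from $\cost_\Delta(\supp \aaa) \subset \cost_\Delta(\supp(\aaa + \ee_l))$. This inclusion is exactly what enters the long exact sequence of the triple $(\Delta,\cost_\Delta(\supp(\aaa+\ee_l)), \cost_\Delta(\supp \aaa))$, yielding precisely the map $\imath^*$ in the statement.

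The main obstacle I anticipate is this last identification: one must carefully match the \v{C}ech face map, together with the duality shift by $-\mathbf{1}$ and the Hochster-to-Gr\"abe rewriting via the long exact sequence of the pair, to the inclusion of contrastars, and check that no extra sign or twist slips in. Concretely, the bookkeeping of how the reduced relative chain complex $C_\bullet(\Delta, \cost_\Delta(F))$ sits inside the \v{C}ech-dual description of $\omega_{K[\Delta]}$ is where the real work lies, and it is where Gr\"abe's original argument spends most of its effort.
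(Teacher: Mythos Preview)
The paper does not give its own proof of this lemma: it is stated with the attribution ``Gr\"abe'' and simply quoted from \cite[Theorem~4]{Gr}, with no argument supplied. So there is nothing in the paper to compare your proposal against beyond the citation itself.

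That said, your outline is a faithful sketch of how Gr\"abe's original argument actually goes: one computes $H^d_{\mideal}(K[\Delta])$ via the \v{C}ech complex, identifies the graded pieces through Hochster's formula, passes to the canonical module by Matlis duality with the shift by $-\mathbf{1}$, and then tracks the $x_l$-action through these identifications. Your case split is correct, and you have correctly flagged the only nontrivial step, namely checking in the case $l\notin\supp(\aaa)$ that the induced map is the one coming from the inclusion of pairs $(\Delta,\cost_\Delta(\supp\aaa))\hookrightarrow(\Delta,\cost_\Delta(\supp(\aaa+\ee_l)))$. One small point worth being careful about: in Gr\"abe's paper the graded pieces of local cohomology are first identified with reduced cohomology of links (or equivalently of stars), and the passage to relative homology $H_{d-1}(\Delta,\cost_\Delta(F))$ uses the excision/long exact sequence identification $\widetilde H_{d-2}(\lk_\Delta F)\cong H_{d-1}(\Delta,\cost_\Delta F)$ rather than the long exact sequence of the pair $(\Delta,\cost_\Delta F)$ directly; keeping this straight is exactly the bookkeeping you anticipate in your last paragraph.
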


In particular,
Lemma \ref{2.1} says that the multiplication
$\times x_F : (\cano_{K[\Delta]})_0 \to (\cano_{K[\Delta]})_{\ee_F}$
can be identified with the map $\imath^* : H_{d-1}(\Delta) \to H_{d-1}(\Delta,\cost_\Delta(F))$
which appears in the long exact sequence of the pair $(\Delta,\cost_\Delta(F))$.
By using this fact,
we prove the following statement which proves the equivalence of (ii), (iii) and (iv) in Theorem \ref{main1}.

\begin{lemma}
\label{2.2}
Let $\Delta$ be a simplicial complex on $[n]$ of dimension $\geq 1$.
The following conditions are equivalent.
\begin{itemize}
\item[(i)] $\imath^* : H_{\dim \Delta}(\Delta) \to H_{\dim \Delta}(\Delta,\cost_\Delta(F))$ is non-zero for any face $F \in \Delta$.
\item[(ii)] There is an injective $S$-homomorphism $\phi : K[\Delta] \to \cano_{K[\Delta]}$ of degree $0$.
\item[(iii)] $\dim_K H_{\dim \Delta}(\Delta)> \dim_K H_{\dim \Delta}(\cost_\Delta(F))$
for any face (equivalently, facet) $F \in \Delta$.
\item[(iv)]
For any point $p \in |\Delta|$,
the natural map $\imath^* : H_{\dim \Delta}(|\Delta|) \to H_{\dim \Delta}(|\Delta|,|\Delta|-p)$
induced by the inclusion $\imath: |\Delta|-p \to |\Delta|$ is non-zero.
\end{itemize}
\end{lemma}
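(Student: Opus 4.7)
The plan is to use Gr\"abe's identifications (Lemma \ref{2.1} together with \eqref{2-1}) to translate (ii) into a condition on elements of $H_{d-1}(\Delta)$, and then prove the remaining equivalences by topological arguments. Write $d-1 = \dim \Delta$. By \eqref{2-1} we have $(\omega_{K[\Delta]})_0 \cong H_{d-1}(\Delta)$, and since $(\omega_{K[\Delta]})_\aaa = 0$ whenever $\supp(\aaa) \notin \Delta$, any $\eta \in H_{d-1}(\Delta)$ extends to a well-defined degree-$0$ $S$-homomorphism $\phi: K[\Delta] \to \omega_{K[\Delta]}$ by $\phi(r) = r \cdot \eta$, and conversely every such $\phi$ arises in this way from $\eta = \phi(1)$. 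Because $\phi$ is $\ZZ^n$-homogeneous, it is injective iff $x^\aaa \cdot \eta \neq 0$ in $(\omega_{K[\Delta]})_\aaa$ for each $\aaa \in \ZZ^n_{\geq 0}$ with $\supp(\aaa) \in \Delta$. Iterating Lemma \ref{2.1} along $x^\aaa = x_{i_1}\cdots x_{i_k}$---each new coordinate triggers the $\imath^*$ case while repeated coordinates act as the identity---I would verify that $x^\aaa \cdot \eta = \imath^*(\eta)$ in $H_{d-1}(\Delta, \cost_\Delta(\supp(\aaa)))$. Thus (ii) is equivalent to the existence of a single $\eta \in H_{d-1}(\Delta)$ with $\imath^*(\eta) \neq 0$ for every $F \in \Delta$.

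The direction (ii) $\Rightarrow$ (i) is then immediate. For (i) $\Rightarrow$ (ii) it suffices to check facets: for any $F$ contained in a facet $F'$, the $\imath^*$ for $F$ factors through the $\imath^*$ for $F'$ via the triple $(\Delta, \cost_\Delta(F), \cost_\Delta(F'))$. By (i), the kernel $K_F := \ker(\imath^*: H_{d-1}(\Delta) \to H_{d-1}(\Delta, \cost_\Delta(F)))$ is a proper subspace of the finite-dimensional $K$-vector space $H_{d-1}(\Delta)$ for each facet $F$; since $K$ is infinite and there are only finitely many facets, I can choose $\eta$ outside the union $\bigcup_F K_F$, which then produces an injective $\phi$.

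For (i) $\Leftrightarrow$ (iii) it suffices to restrict to facets $F$, so that $\cost_\Delta(F) = \Delta \setminus \{F\}$ is still $(d-1)$-dimensional. Since both $\Delta$ and $\cost_\Delta(F)$ have no $d$-chains, the natural map $H_{d-1}(\cost_\Delta(F)) \to H_{d-1}(\Delta)$ is injective (it is induced by the inclusion on the top cycle groups). The long exact sequence of the pair $(\Delta, \cost_\Delta(F))$ then makes $\imath^* \neq 0$ equivalent to this inclusion being non-surjective, that is, to $\dim_K H_{d-1}(\cost_\Delta(F)) < \dim_K H_{d-1}(\Delta)$.

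For (i) $\Leftrightarrow$ (iv), given $p \in |\Delta|$ let $F$ be the unique face of $\Delta$ whose relative interior contains $p$; a standard straight-line deformation inside $|\st_\Delta(F)|$ retracts $|\Delta|-p$ onto $|\cost_\Delta(F)|$ within $|\Delta|$. This identifies $(|\Delta|,|\Delta|-p)$ with $(\Delta, \cost_\Delta(F))$ up to homotopy, matching the two versions of $\imath^*$; as $p$ varies the carrier $F$ covers every non-empty face, and the trivial case $F = \emptyset$ in (i) follows automatically from any non-empty face. The main technical obstacle I anticipate is the inductive verification via Lemma \ref{2.1} that $x^\aaa \cdot \eta = \imath^*(\eta)$ in the correct graded component of $\omega_{K[\Delta]}$, which requires carefully tracking how each coordinate multiplication selects among the three cases of that lemma.
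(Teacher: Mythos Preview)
Your proof is correct and follows essentially the same route as the paper: Gr\"abe's description reduces (ii) to an avoidance problem over the infinite field $K$ (the paper phrases this via squarefree modules, you unpack it directly via the $\ZZ^n$-grading and the ``identity'' case of Lemma~\ref{2.1}), the long exact sequence of the pair handles (i)$\Leftrightarrow$(iii), and the standard identification $(|\Delta|,|\Delta|-p) \simeq (\Delta,\cost_\Delta(F))$ gives (i)$\Leftrightarrow$(iv). One small slip: since $F \subset F'$ forces $\cost_\Delta(F) \subset \cost_\Delta(F')$, the valid triple is $(\Delta,\cost_\Delta(F'),\cost_\Delta(F))$ and it is $\imath^*$ for the facet $F'$ that factors through $\imath^*$ for $F$---but this is precisely what your reduction needs, as it yields $\ker \imath^*_F \subset \ker \imath^*_{F'}$, so avoiding the (finitely many, proper) facet kernels suffices.
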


\begin{proof}
Since both $K[\Delta]$ and $\omega_{K[\Delta]}$ are squarefree $S$-modules
(c.f.\ \cite{Ya}),
a degree preserving injection $\phi : K[\Delta] \to \omega_{K[\Delta]}$ exists if and only if
there is an element $f \in \cano_{K[\Delta]}$ of degree $0$
such that $x_F f \ne 0$ for any $F \in \Delta$.
Since $K$ is infinite,
the latter condition is equivalent to saying that,
for any face (equivalently, facet) $F \in \Delta$, the multiplication map
\begin{align*}
\times x_F: (\cano_{K[\Delta]})_0 \to (\cano_{K[\Delta]})_{\ee_F}
\end{align*}
is non-zero.
Then Lemma \ref{2.1} proves the equivalence of (i) and (ii).
Also, the long exact sequence of the pair $(\Delta,\cost_\Delta(F))$
$$
0
\longrightarrow
H_{\dim \Delta}(\cost_\Delta(F))
\longrightarrow
H_{\dim \Delta}(\Delta)
\stackrel{\imath^*}{\longrightarrow}
H_{\dim \Delta}(\Delta,\cost_\Delta(F))
\longrightarrow
\cdots
$$
guarantees the equivalence of (i) and (iii).

The equivalence of (i) and (iv) is standard in topology.
Observe that, for any point $p \in |\Delta|$,
if $F \in \Delta$ is the smallest face (w.r.t.\ inclusion) such that 
$p$ lies in the convex hull of $\{\ee_i:i \in F\}$,
then $p$ line in its relative interior (note that the relative interior of a point is the point itself)
and $ H_{\dim \Delta} (|\Delta|,|\Delta|-p) \cong H_{\dim \Delta} (|\Delta|,|\cost_\Delta(F)|)$
(see \cite[Lemma 1.3]{Gr} and \cite[Lemma 3.3]{Mu}).
Consider the commutative diagram
\begin{align*}
H_{\dim \Delta}(|\Delta|) & \stackrel{\imath^*}{\longrightarrow} H_{\dim \Delta} (|\Delta|,|\cost_\Delta(F)|)\\
\downarrow \varphi \hspace{15pt}& \hspace{60pt}\downarrow \psi\\
H_{\dim \Delta}(|\Delta|) & \stackrel{\imath^*}{\longrightarrow} H_{\dim \Delta} (|\Delta|,|\Delta|-p)
\end{align*}
induced by the inclusion.
Then $\varphi$ is identity and $\psi$ is an isomorphism.
This proves the equivalence of (i) and (iv).
\end{proof}

To prove Theorem \ref{main1},
we need one more technical lemma.
Let $\Delta$ be a simplicial complex.
For a face $F \in \Delta$,
the simplicial complex
$$\lk_\Delta(F)=\{ G \in \Delta: F \cup G \in \Delta,\ F \cap G= \emptyset\}$$
is called the {\em link} of $F$ in $\Delta$.
We write $\lk_\Delta(v)=\lk_\Delta(\{v\})$ for simplicity.
A simplicial complex is said to be {\em pure} if all its facets have the same cardinality.

\begin{lemma}
\label{2.3}
Let $\Delta$ be a pure simplicial complex of dimension $\geq 2$ and $v$ a vertex of $\Delta$.
If $\Delta$ satisfies the condition (i) in Lemma \ref{2.2}
then so does $\lk_\Delta(v)$.
\end{lemma}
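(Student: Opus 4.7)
The plan is to derive condition (i) of Lemma \ref{2.2} for $\lk_\Delta(v)$ from the same condition for $\Delta$, exploiting the naturality of the standard isomorphism $H_i(\Delta,\cost_\Delta(G)) \cong \tilde H_{i-|G|}(\lk_\Delta(G))$ under inclusions of faces. Write $d-1 = \dim \Delta$, so that $d \geq 3$ and $\dim \lk_\Delta(v) = d-2 \geq 1$; the latter guarantees $H_{d-2}(X) = \tilde H_{d-2}(X)$ for every space $X$ appearing below. Fix an arbitrary face $F' \in \lk_\Delta(v)$ and set $F = F' \cup \{v\} \in \Delta$; a direct check gives $\lk_\Delta(F) = \lk_{\lk_\Delta(v)}(F')$ and $|F| = |F'|+1$.

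Since $v \in F$, one has $\cost_\Delta(v) \subseteq \cost_\Delta(F)$, and the chain of inclusions $\emptyset \subseteq \cost_\Delta(v) \subseteq \cost_\Delta(F)$ of subcomplexes of $\Delta$ induces a factorization
\[
H_{d-1}(\Delta) \longrightarrow H_{d-1}(\Delta,\cost_\Delta(v)) \stackrel{k}{\longrightarrow} H_{d-1}(\Delta,\cost_\Delta(F)),
\]
whose composition is precisely the map $\imath^*$ appearing in Lemma \ref{2.2}(i) for $\Delta$ at the face $F$. By hypothesis this composition is non-zero, so $k$ is non-zero.

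It remains to identify $k$ with the map $\imath^*: H_{d-2}(\lk_\Delta(v)) \to H_{d-2}(\lk_\Delta(v),\cost_{\lk_\Delta(v)}(F'))$ prescribed by Lemma \ref{2.2}(i) for $\lk_\Delta(v)$ at $F'$. Applying the standard isomorphism at $G = \{v\}$ and $G = F$ yields $H_{d-1}(\Delta,\cost_\Delta(v)) \cong \tilde H_{d-2}(\lk_\Delta(v))$ and $H_{d-1}(\Delta,\cost_\Delta(F)) \cong \tilde H_{d-2-|F'|}(\lk_{\lk_\Delta(v)}(F'))$, while the same isomorphism applied inside $\lk_\Delta(v)$ at $G = F'$ identifies $H_{d-2}(\lk_\Delta(v),\cost_{\lk_\Delta(v)}(F'))$ with the very same group $\tilde H_{d-2-|F'|}(\lk_{\lk_\Delta(v)}(F'))$. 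The main, and essentially only, obstacle is to verify that under these identifications $k$ corresponds to the $\imath^*$ for $\lk_\Delta(v)$ at $F'$. I would establish this naturality by a diagram chase in the long exact sequences induced by the decomposition $\Delta = \st_\Delta(v) \cup \cost_\Delta(v)$, using excision together with the fact that $\st_\Delta(v) = v * \lk_\Delta(v)$ is contractible, so that the connecting homomorphism realises the cone-excision isomorphism compatibly at both levels. Once the compatibility is in place, non-vanishing of $k$ immediately yields non-vanishing of the target map, completing the argument.
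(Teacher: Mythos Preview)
Your proposal is correct and follows essentially the same route as the paper: factor the map $\imath^*$ for $\Delta$ at the face $F' \cup \{v\}$ through $H_{d-1}(\Delta,\cost_\Delta(v))$, deduce that the second factor $k$ is non-zero, and then identify $k$ with the required map for $\lk_\Delta(v)$ via the standard isomorphisms. The only cosmetic difference is that the paper identifies $H_{d-2}(\lk_\Delta(v),\cost_{\lk_\Delta(v)}(F'))$ directly with $H_{d-1}(\Delta,\cost_\Delta(F'\cup\{v\}))$ and justifies the compatibility in one line (the isomorphisms send the class of $G$ to that of $G\cup\{v\}$, hence commute with $\imath^*$), whereas you pass through the auxiliary group $\widetilde H_{d-2-|F'|}(\lk_{\lk_\Delta(v)}(F'))$ and sketch a diagram chase via the star decomposition; both settle the same naturality.
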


\begin{proof}
Let $d= \dim \Delta +1$ and let $F$ be a face of $\lk_\Delta(v)$.
By the purity of $\Delta$, $\dim \lk_\Delta(v)=d-2$.
Thus, what we must prove is that
the map
\begin{align}
\label{2-4}
\imath^* :H_{d-2}(\lk_\Delta(v)) \to H_{d-2}(\lk_\Delta(v), \cost_{\lk_\Delta(v)}(F))
\end{align}
is non-zero.
By the natural isomorphisms
$H_{d-2}(\lk_\Delta(v)) \cong H_{d-1}(\Delta,\cost_\Delta(v))$
and
$H_{d-2}(\lk_\Delta(v),\cost_{\lk_\Delta(v)}(F)) \cong H_{d-2}(\Delta,\cost_{\Delta}(F \cup \{v\})),$
the map \eqref{2-4} is identified with the map
\begin{align*}
\imath^* : H_{d-1}(\Delta,\cost_\Delta(v))
{\longrightarrow} H_{d-1}(\Delta,\cost_\Delta(F \cup \{v\}))
\end{align*}
in the long exact sequence of the triple
$(\Delta,\cost_\Delta(F \cup \{v\}),\cost_\Delta(v))$.
(Indeed, the natural isomorphisms send an element corresponding to $G \in \lk_\Delta(v)$ to that of $G \cup \{v\}$ in the homology, so they commute with $\imath^*$.)
This map  is non-zero since its composition with the map
$\imath^*:H_{d-1}(\Delta) \to H_{d-1}(\Delta,\cost_\Delta(v))$ is the map
$\imath^*:H_{d-1}(\Delta) \to H_{d-1}(\Delta,\cost_\Delta(F \cup \{v\}))$
which is non-zero by the assumption.
\end{proof}

We also recall Reisner's criterion of CM simplicial complexes \cite{Re}.

\begin{lemma}[Reisner's criterion]
A $(d-1)$-dimensional simplicial complex $\Delta$ is CM
if and only if, for any face $F \in \Delta$ (including the empty face $\emptyset$),
$\widetilde H_{k}(\lk_\Delta(F))=0$ for all $k \ne d-1-\#F$.
\end{lemma}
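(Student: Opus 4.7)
The plan is to pass through local cohomology. First, I would recall that $K[\Delta]$, viewed as a graded $S$-module, has Krull dimension $d$, so by Grothendieck's vanishing/non-vanishing theorem it is Cohen--Macaulay if and only if $H^i_\mideal(K[\Delta])=0$ for every $i<d$, where $\mideal=(x_1,\dots,x_n)$. This converts the ring-theoretic condition into a vanishing statement for the $\ZZ^n$-graded local cohomology modules.

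The next step is to invoke Hochster's formula for the graded components of these modules. It says that $H^i_\mideal(K[\Delta])_\aaa$ vanishes unless $\aaa\in\ZZ^n_{\leq 0}$ and $F:=\supp(\aaa)\in\Delta$, in which case
$$\dim_K H^i_\mideal(K[\Delta])_\aaa = \dim_K \widetilde H_{i-\#F-1}(\lk_\Delta(F);K).$$
Thus $H^i_\mideal(K[\Delta])=0$ for all $i<d$ is equivalent to $\widetilde H_{i-\#F-1}(\lk_\Delta(F))=0$ for every $F\in\Delta$ and every $i<d$. Reparametrizing by $k=i-\#F-1$, the condition becomes $\widetilde H_k(\lk_\Delta(F))=0$ for every $F\in\Delta$ and every $k<d-1-\#F$. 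The complementary vanishing for $k>d-1-\#F$ is automatic, because any $G\in\lk_\Delta(F)$ satisfies $F\cup G\in\Delta$ with $F\cap G=\emptyset$, forcing $\dim\lk_\Delta(F)\leq d-1-\#F$. Combining the two ranges yields exactly the stated criterion, and the empty face $F=\emptyset$ is covered by the same formula with $\lk_\Delta(\emptyset)=\Delta$.

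The main obstacle is Hochster's formula itself, which is the substantive input: its derivation requires writing down the \v Cech complex of $K[\Delta]$ on $x_1,\dots,x_n$ and identifying its $\aaa$-graded strand as (a shift of) the reduced chain complex of $\lk_\Delta(\supp(\aaa))$. In the present paper, however, Hochster's formula is treated as a standard background tool --- its Hilbert-series incarnation is already cited earlier when the $a$-invariant of an almost Gorenstein* complex is argued to be zero --- so for the purposes of this lemma it would be invoked as a classical result rather than re-proved.
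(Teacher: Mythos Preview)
Your outline is correct and is the standard modern derivation of Reisner's criterion via Hochster's formula for the graded pieces of local cohomology. However, the paper does not actually supply a proof of this lemma: it is stated as a classical result with a citation to Reisner's original paper \cite{Re}, and used as a black box thereafter. So there is no ``paper's own proof'' to compare against; your sketch simply fills in what the authors leave as background, and does so along the expected lines.
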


We now prove Theorem \ref{main1}.
For subsets $F_1,F_2,\dots,F_r$ of $[n]$,
let
$$
\langle F_1,F_2,\dots,F_r \rangle=\big\{ G \subset [n]:
G \subset F_k \mbox{ for some } k \in \{1,2,\dots,r\}\big\}
$$
be the simplicial complex generated by $F_1,F_2,\dots,F_r$.
Also, for $F \subset [n]$,
let $$\partial F= \langle F \setminus \{i\}: i \in F\rangle.$$

\begin{proof}[Proof of Theorem \ref{main1}]
The equivalence (ii) $\Leftrightarrow$ (iii) $\Leftrightarrow$ (iv)
follows from Lemma \ref{2.2}.
We prove (i) $\Leftrightarrow$ (iii).

Observe that $\widetilde H_i(\Delta)=0$ for all $i \ne d-1$ since $\Delta$ is CM.
Let $F$ be a facet of $\Delta$.
Then
we have the Mayer--Vietoris long exact sequence
\begin{eqnarray}
\label{2-3}
\begin{array}{lll}
&0\longrightarrow \widetilde H_{d-1}(\langle F \rangle) \bigoplus \widetilde H_{d-1}(\cost_\Delta(F))
\stackrel{\psi}{\longrightarrow} \widetilde H_{d-1}(\Delta)
\stackrel{\varphi}{\longrightarrow} \widetilde H_{d-2}(\partial F)\\
&\longrightarrow \widetilde H_{d-2}(\langle F \rangle) \bigoplus \widetilde H_{d-2}(\cost_\Delta(F))
\longrightarrow 0 
\longrightarrow \cdots.
\end{array}
\end{eqnarray}
Note that $\widetilde H_k(\langle F \rangle)=0$ for all $k$.

We first prove (i) $\Rightarrow$ (iii).
Suppose that $\cost_\Delta(F)$ is CM of dimension $d-1$.
Then $\widetilde H_{d-2}(\cost_\Delta(F))=0$.
Since $\widetilde H_{d-2}(\partial F) \cong K$,
the exact sequence \eqref{2-3} implies $\dim_K H_{d-1}(\cost_\Delta(F)) < \dim_K H_{d-1}(\Delta)$.

We prove (iii) $\Rightarrow$ (i) by induction on $d$.
When $d=2$, the implication easily follows from the fact that a $1$-dimensional simplicial complex is CM if and only if it is connected.
Suppose $d >2$.
We prove that $\cost_\Delta(F)$ is CM of dimension $d-1$.
Since $\Delta$ has the non-zero top homology by the assumption,
$\Delta$ is not a simplex and $\dim (\cost_\Delta(F))=d-1$.
Since $\dim_K \widetilde H_{d-1}(\cost_\Delta(F)) < \dim_K \widetilde H_{d-1}(\Delta)$,
$\widetilde H_{d-1}(\langle F \rangle)=0$ and $\widetilde H_{d-2}(\partial F) \cong K$,
the map $\varphi$ in \eqref{2-3} is surjective.
Hence $\widetilde H_{d-2}(\cost_\Delta(F))=0$.
The exact sequence \eqref{2-3} also shows $\widetilde H_k(\cost_\Delta(F))=0$ for $k < d-2$ since $\widetilde H_k(\Delta)=\widetilde H_k(\partial F) =0$ for $k \leq d-2$.
Then Reisner's criterion says that, to prove that $\cost_\Delta(F)$ is CM,
it is enough to prove that $\lk_{\cost_\Delta(F)}(v)$ is CM for any vertex $v$ of $\cost_\Delta(F)$.
A routine computation implies
\begin{align}
\label{66}
\lk_{\cost_\Delta(F)}(v)=
\begin{cases}
\lk_\Delta(v), & \mbox{ if } v \not \in F,\\
\cost_{\lk_\Delta(v)}(F\setminus \{v\}), & \mbox{ if } v \in F.
\end{cases}
\end{align}
By Reisner's criterion, $\lk_\Delta(v)$ is CM.
Also, $\lk_\Delta(v)$ satisfies condition (iii) by Lemmas \ref{2.2} and \ref{2.3}.
Then $\lk_\Delta(v)$ is uniformly CM by the induction hypothesis
and \eqref{66} proves that $\lk_{\cost_\Delta(F)}(v)$ is CM, as desired.
\end{proof}

\begin{remark}
A link of a uniformly CM simplicial complex is uniformly CM by \eqref{66}.
\end{remark}

\begin{remark}
There are non-CM simplicial complexes satisfying the equivalent conditions (ii)--(iv) of Theorem \ref{main1}.
For example,
Buchsbaum* complexes introduced by Athanasiadis and Welker \cite{AW}
satisfy the conditions. See \cite[Proposition 2.3]{AW}.
\end{remark}

In the rest of this section,
we discuss a few easy properties of uniformly CM simplicial complexes.
First, we consider an enumerative property.
For a $(d-1)$-dimensional simplicial complex $\Delta$,
its {\em $f$-vector} is the sequence
$$f(\Delta)=(f_{-1}(\Delta),f_0(\Delta),\dots,f_{d-1}(\Delta))$$
defined by $f_i(\Delta)=\#\{F \in \Delta: \# F=i+1\}$,
where $f_{-1}(\Delta)=1$,
and its {\em $h$-vector} $h(\Delta)=(h_0(\Delta),h_1(\Delta),\dots,h_d(\Delta))$
is defined by the relation
$$
\sum_{k=0}^d f_{k-1}(\Delta) (t-1)^{d-k}
= \sum_{k=0}^d h_k(\Delta) t^{d-k}.
$$
It is well-known that the $h$-vector of $\Delta$ and the Hilbert series of $K[\Delta]$
are related by
$$
\sum_{k=0}^\infty (\dim_K K[\Delta]_k) t^k = \frac 1 {(1-t)^d} \big(h_0(\Delta)+h_1(\Delta)t+ \cdots + h_d(\Delta)t^d \big).
$$

Let $\Delta$ be a $(d-1)$-dimensional uniformly CM simplicial complex with the $h$-vector $h(\Delta)=(h_0,h_1,\dots,h_d)$.
By Theorem \ref{main1},
there exists an injection $\phi: K[\Delta] \to \omega_{K[\Delta]}$.
Then the module $C=\coker \phi$ is CM of Krull dimension $d-1$ \cite[Lemma 3.1]{GTT} unless $C=0$.
Also, since the Hilbert series of $\omega_{K[\Delta]}$ is given by
$$
\frac 1 {(1-t)^d} \big(h_d(\Delta)+h_{d-1}(\Delta)t+ \cdots + h_0(\Delta)t^d \big),
$$
the short exact sequence 
$0 
\to
K[\Delta]
\to
\omega_{K[\Delta]}
\to
C 
\to
0$
says that the Hilbert series of $C$ is given by
\begin{align}
\label{2-5}
& \sum_{k=0}^\infty (\dim_K C_k) t^k = \frac 1 {(1-t)^{d-1}} \left\{\sum_{i=0}^{d-1} \big((h_d + \cdots +h_{d-i})-(h_0+ \cdots + h_i)\big)t^i\right\}.
\end{align}
Since the $h$-vector of a CM $S$-module is non-negative,
these facts imply
the following statement,
which is an analogue of Stanley's inequality for Ehrhart $\delta$-vectors \cite[Proposition 4.1]{St2}.

\begin{proposition}
\label{2.6}
If $\Delta$ is a $(d-1)$-dimensional uniformly CM simplicial complex
with the $h$-vector $h(\Delta)=(h_0,h_1,\dots,h_d)$,
then
$$h_d +h_{d-1} + \cdots +h_{d-i} \geq h_0+h_1+ \cdots + h_i $$
for all $i=0,1,\dots,d$.
\end{proposition}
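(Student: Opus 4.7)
The plan is to identify the inequalities as the non-negativity of the $h$-vector of the cokernel of the degree-zero injection produced by Theorem \ref{main1}. Concretely, since $\Delta$ is uniformly CM, Theorem \ref{main1} furnishes a degree-zero $S$-linear injection $\phi : K[\Delta] \to \omega_{K[\Delta]}$. Setting $C = \coker \phi$, we obtain the short exact sequence
\begin{equation*}
0 \longrightarrow K[\Delta] \longrightarrow \omega_{K[\Delta]} \longrightarrow C \longrightarrow 0.
\end{equation*}
If $C = 0$, then $\phi$ is an isomorphism, and comparing the two Hilbert series recorded in the discussion preceding the proposition forces $h_i = h_{d-i}$ for every $i$, so the asserted inequalities hold as equalities. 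Otherwise, by \cite[Lemma 3.1]{GTT}, $C$ is Cohen--Macaulay of Krull dimension $d-1$.

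From the Hilbert series of $K[\Delta]$ and $\omega_{K[\Delta]}$, additivity across the short exact sequence produces the expression \eqref{2-5} for $H_C(t)$. The fact that the denominator in \eqref{2-5} is $(1-t)^{d-1}$ rather than $(1-t)^d$ reflects the cancellation of one factor of $1-t$ made possible by the identity $\sum_{i=0}^d (h_{d-i} - h_i) = 0$; after this cancellation, the polynomial
\begin{equation*}
\sum_{i=0}^{d-1} \Bigl( (h_d + h_{d-1} + \cdots + h_{d-i}) - (h_0 + h_1 + \cdots + h_i)\Bigr) t^i
\end{equation*}
is precisely the $h$-polynomial of the $(d-1)$-dimensional CM module $C$.

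The last step invokes the standard fact that the $h$-vector of any Cohen--Macaulay finitely generated graded $S$-module has non-negative entries, which one sees by passing to a generic Artinian reduction and noting that the $h$-polynomial then records the Hilbert function of a finite-length module. Applied to $C$, this gives $(h_d + \cdots + h_{d-i}) - (h_0 + \cdots + h_i) \geq 0$ for $i = 0, 1, \ldots, d-1$, while the case $i = d$ is the trivial equality $\sum_j h_j = \sum_j h_{d-j}$. Beyond this chain of identifications there is no real obstacle, since the substantive input (the existence of the injection $\phi$) is already supplied by Theorem \ref{main1}; the only point meriting a line of care is the cancellation of the $(1-t)$ factor, which is automatic from the symmetric shape of the two numerator polynomials.
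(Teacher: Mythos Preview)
Your proposal is correct and follows essentially the same approach as the paper: the paper's argument, given in the paragraph immediately preceding the proposition, likewise invokes Theorem~\ref{main1} to obtain the injection, cites \cite[Lemma~3.1]{GTT} for the Cohen--Macaulayness of $C$, computes its Hilbert series as in \eqref{2-5}, and concludes from the non-negativity of the $h$-vector of a CM module. Your write-up merely adds a couple of clarifying details (the $C=0$ case, the cancellation of the $(1-t)$ factor, and the separate handling of $i=d$) that the paper leaves implicit.
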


Next, we discuss 2-CM simplicial complexes.
A $d$-dimensional simplicial complex $\Delta$ is said to be {\em doubly Cohen--Macaulay} ($2$-CM for short)
if it is CM and, for any vertex $v$ of $\Delta$,
$\cost_\Delta(v)$ is CM of dimension $d$.
Note that it is known that a CM simplicial complex $\Delta$ is $2$-CM if and only if $K[\Delta]$ is level and has $a$-invariant zero \cite[p.\ 94]{St}.
It is not hard to see that 2-CM simplicial complexes are uniformly CM
(see e.g.\ the proof of \cite[III Proposition 3.3]{St}).
The following result gives a more precise relation between these properties.

\begin{proposition}
\label{3.4}
If a simplicial complex $\Delta$ is $2$-CM,
then $\cost_\Delta(F)$ is CM for any face $F \in \Delta$.
\end{proposition}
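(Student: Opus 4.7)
The plan is to verify Reisner's criterion for $\cost_\Delta(F)$ by induction on $\dim \Delta$, combining an auxiliary homological vanishing with a direct link computation; the low-dimensional base cases (graphs and finite sets of vertices) are straightforward from the definition.

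\emph{Auxiliary vanishing.} I first show $\widetilde{H}_i(\cost_\Delta(F)) = 0$ for all $i < d - 1$, where $d - 1 = \dim \Delta$. The long exact sequence of the pair $(\Delta, \cost_\Delta(F))$, together with the standard identification $H_i(\Delta, \cost_\Delta(F)) \cong \widetilde{H}_{i - |F|}(\lk_\Delta(F))$, reduces the claim to proving that the top-degree map
$$H_{d-1}(\Delta) \longrightarrow \widetilde{H}_{d - 1 - |F|}(\lk_\Delta(F))$$
is surjective. By Lemma \ref{2.1} this is precisely the multiplication $\times x_F : (\omega_{K[\Delta]})_0 \to (\omega_{K[\Delta]})_{\ee_F}$ on the canonical module. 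The paragraph preceding the proposition recalls that $\Delta$ is 2-CM if and only if $K[\Delta]$ is level with $a$-invariant zero, which forces $\omega_{K[\Delta]}$ to be generated in degree zero; hence the multiplication is surjective, as required.

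\emph{Link computation and conclusion.} For any $H \in \cost_\Delta(F)$, a direct calculation from the definitions gives
$$\lk_{\cost_\Delta(F)}(H) = \begin{cases} \lk_\Delta(H), & \mbox{if } F \cup H \notin \Delta, \\ \cost_{\lk_\Delta(H)}(F \setminus H), & \mbox{if } F \cup H \in \Delta, \end{cases}$$
where $F \setminus H$ is a non-empty face of $\lk_\Delta(H)$ in the second case (since $H \in \cost_\Delta(F)$ forces $F \not\subset H$). Reisner's criterion for $\cost_\Delta(F)$ then splits into three cases: the case $H = \emptyset$ is handled by the auxiliary vanishing; the case $H \neq \emptyset$ with $F \cup H \notin \Delta$ is handled by the Cohen--Macaulay property of $\Delta$; and the case $H \neq \emptyset$ with $F \cup H \in \Delta$ reduces to the Cohen--Macaulay property of $\cost_{\lk_\Delta(H)}(F \setminus H)$. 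For the last case I would first verify the standard fact that $\lk_\Delta(H)$ is itself 2-CM---this follows from the identity $\cost_{\lk_\Delta(H)}(v) = \lk_{\cost_\Delta(v)}(H)$, which transfers the Cohen--Macaulay property of $\cost_\Delta(v)$ to $\cost_{\lk_\Delta(H)}(v)$---and then apply the inductive hypothesis, since $\dim \lk_\Delta(H) < \dim \Delta$.

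The main obstacle is the surjectivity at the top degree in the auxiliary vanishing step; a purely combinatorial proof is not readily apparent, so the argument routes through the algebraic identification of the connecting map with multiplication on $\omega_{K[\Delta]}$, together with the levelness characterization of 2-CM.
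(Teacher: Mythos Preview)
Your proof is correct, but it follows a different route from the paper's. The paper argues by a decomposition: for $v\in F$ one has
\[
\cost_\Delta(F)=\cost_\Delta(v)\cup\bigl(v*\cost_{\lk_\Delta(v)}(F\setminus\{v\})\bigr),
\]
with intersection $\cost_{\lk_\Delta(v)}(F\setminus\{v\})$; since links of $2$-CM complexes are $2$-CM, the induction hypothesis and Lemma~\ref{3.1} (gluing CM complexes along a codimension-one CM intersection) finish the argument. A remark then observes that stellar subdivision gives an even shorter proof, reducing to the vertex case directly. Your approach instead verifies Reisner's criterion head-on: the global vanishing $\widetilde H_i(\cost_\Delta(F))=0$ for $i<d-1$ is obtained from the levelness characterisation of $2$-CM together with Gr\"abe's description of $\omega_{K[\Delta]}$, and the links are handled by the formula for $\lk_{\cost_\Delta(F)}(H)$ and induction. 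The paper's argument is more elementary in that it avoids the canonical module entirely, whereas your argument has the virtue of exploiting the machinery already set up in Section~2 and of making transparent \emph{why} the $2$-CM hypothesis (via levelness) controls the top relative homology. Both reduce to the same fact that links of $2$-CM complexes are $2$-CM.
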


To prove the above proposition,
we need the following well-known fact (see, e.g., the proof of \cite[Theorem 5.1.13]{BH}).

\begin{lemma}
\label{3.1}
Let $\Delta$ and $\Gamma$ be $d$-dimensional CM simplicial complexes.
If $\Delta \cap \Gamma$ is CM of dimension $d-1$,
then $\Delta \cup \Gamma$ is CM.
\end{lemma}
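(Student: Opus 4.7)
The plan is to verify Reisner's criterion directly for $\Delta \cup \Gamma$. Since $\Delta$ and $\Gamma$ are pure of dimension $d$ (being CM), the union is also pure of dimension $d$, so it suffices to show that for every face $F \in \Delta \cup \Gamma$ (including $F = \emptyset$) one has $\widetilde H_k(\lk_{\Delta \cup \Gamma}(F)) = 0$ for all $k < d - \#F$; the vanishing for $k > d - \#F$ is automatic from the dimension of the link.

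First I would split on where $F$ sits. If $F \in \Delta$ but $F \notin \Gamma$, then every facet of $\Delta \cup \Gamma$ containing $F$ must already be a facet of $\Delta$, so $\lk_{\Delta \cup \Gamma}(F) = \lk_\Delta(F)$, and the vanishing is immediate from Reisner applied to the CM complex $\Delta$. The case $F \in \Gamma \setminus \Delta$ is symmetric. The substantive case is $F \in \Delta \cap \Gamma$ (or $F = \emptyset$), where I have the decomposition
$$\lk_{\Delta \cup \Gamma}(F) = \lk_\Delta(F) \cup \lk_\Gamma(F), \qquad \lk_\Delta(F) \cap \lk_\Gamma(F) = \lk_{\Delta \cap \Gamma}(F).$$
I would then feed this into the Mayer--Vietoris long exact sequence. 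For $k < d - \#F$, Reisner applied to the CM complexes $\Delta$ and $\Gamma$ kills $\widetilde H_k(\lk_\Delta(F))$ and $\widetilde H_k(\lk_\Gamma(F))$; and Reisner applied to the $(d-1)$-dimensional CM complex $\Delta \cap \Gamma$ kills $\widetilde H_{k-1}(\lk_{\Delta \cap \Gamma}(F))$ in exactly the range $k - 1 < (d-1) - \#F$, i.e.\ $k < d - \#F$. Both flanking terms in the Mayer--Vietoris sequence vanish simultaneously, forcing $\widetilde H_k(\lk_{\Delta \cup \Gamma}(F)) = 0$ in the required range.

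The only delicate point is the indexing: since $\dim(\Delta \cap \Gamma)$ is one less than $\dim \Delta = \dim \Gamma$, the Reisner threshold for the intersection is also one unit lower, and one has to check that this exactly matches the degree shift $\widetilde H_{k-1}$ coming from the Mayer--Vietoris connecting map. This compatibility is what makes the hypothesis ``$\dim(\Delta \cap \Gamma) = d - 1$'' precisely the right one. Apart from this bookkeeping I do not anticipate any real obstacle, and in particular no induction on $d$ is needed.
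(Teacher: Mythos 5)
Your proof is correct. Note, though, that the paper itself gives no argument for this lemma: it cites the proof of \cite[Theorem 5.1.13]{BH}, where the statement is obtained algebraically from the Mayer--Vietoris short exact sequence of Stanley--Reisner rings
$0 \to K[\Delta \cup \Gamma] \to K[\Delta] \oplus K[\Gamma] \to K[\Delta \cap \Gamma] \to 0$
together with the depth lemma: since $\depth (K[\Delta]\oplus K[\Gamma]) = d+1$ and $\depth K[\Delta\cap\Gamma] = d$, one gets $\depth K[\Delta\cup\Gamma] \geq d+1 = \dim K[\Delta\cup\Gamma]$. Your route is genuinely different: you verify Reisner's criterion face by face, using the topological Mayer--Vietoris sequence of $\lk_\Delta(F) \cup \lk_\Gamma(F)$ with intersection $\lk_{\Delta\cap\Gamma}(F)$, and your case analysis ($F$ in only one of the complexes versus $F$ in the intersection), the purity observation, and the degree bookkeeping $k-1 < (d-1)-\#F \Leftrightarrow k < d-\#F$ are all right; the boundary case where $\lk_{\Delta\cap\Gamma}(F)=\{\emptyset\}$ is handled correctly by the reduced-homology convention $\widetilde H_{-1}(\{\emptyset\})\cong K$, which is exactly what Reisner's criterion for the $(d-1)$-dimensional intersection rules out when $\#F<d$. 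What the algebraic argument buys is brevity (two lines given the depth lemma) and no case analysis; what your argument buys is that it stays entirely at the level of simplicial topology, using only Reisner's criterion, which the paper quotes anyway, and it makes transparent why the hypothesis $\dim(\Delta\cap\Gamma)=d-1$ is exactly the right one — the same role it plays in the depth count.
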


For a simplicial complex $\Delta$ and a new vertex $v$ which is not in $\Delta$,
let
$$v *\Delta = \Delta \cup \big\{ \{v\} \cup F: F \in \Delta \big\}$$
be the {\em cone} over $\Delta$ with apex $v$.
Note that if $\Delta$ is CM then so is $v*\Delta$.

\begin{proof}[Proof of Proposition \ref{3.4}]
We use induction on $d = \dim \Delta +1$.
The statement is obvious when $d=1$.
Suppose $d >1$.
Let $F \in \Delta$ be a face with $\#F>1$.
We prove that $\cost_\Delta(F)$ is CM.

Let $v \in F$.
A routine computation says
$$\cost_\Delta(F)=\cost_\Delta(v) \cup \big( v * \cost_{\lk_\Delta(v)}(F \setminus \{v\}) \big).$$
Since a link of a $2$-CM simplicial complex is 2-CM \cite[Proposition 9.7]{Wa},
by the induction hypothesis,
$\cost_{\lk_\Delta(v)}(F \setminus \{v\})$ is CM of dimension $d-2$.
Then, since $\cost_\Delta(v)$ is CM of dimension $d-1$
and since
$$\cost_\Delta(v) \cap (v * \cost_{\lk_\Delta(v)}(F \setminus \{v\}))
= \cost_{\lk_\Delta(v)}(F \setminus \{v\}),$$
it follows from Lemma \ref{3.1} that
$\cost_\Delta(F)$ is CM as desired.
\end{proof}

\begin{remark}
The proof of Proposition 2.8 can be simplified by using stellar subdivisions (see \cite[Definition 2.22]{Ko}).
Indeed, if $\Delta$ is $2$-CM and $F$ is its face, then the stellar subdivision $\Delta'$ of $\Delta$ with respect to $F$ is $2$-CM since $2$-CM property is a topological property.
Also, we have $\cost_{\Delta}(F)=\cost_{\Delta'}(v_F)$, where $v_F$ is the new vertex that is added when we take a stellar subdivision, so $\cost_{\Delta}(F)$ is CM.
\end{remark}

\section{Almost Gorenstein* simplicial complexes}

In this section,
we study almost Gorenstein* simplicial complexes.

\subsection{The almost Gorenstein* property}

Recall that a simplicial complex $\Delta$ is almost Gorenstein*
if $K[\Delta]$ is almost Gorenstein and $\widetilde H_{\dim \Delta}(\Delta)$ is non-zero.
In other words, $\Delta$ is almost Gorenstein* if it is CM and  there is an injection $\phi:K[\Delta] \to \omega_{K[\Delta]}$ of degree $0$ such that the multiplicity of $\coker \phi$ is equal to the number of its minimal generators.

Let $\Delta$ be a $(d-1)$-dimensional CM simplicial complex on $[n]$.
We write $\type(\Delta)$ for the {\em type} of $K[\Delta]$,
that is,
$$\type(\Delta)= \dim_K \Tor_{n-d}^S(K[\Delta],K).$$
Also, we define
$$\delta(\Delta)=\sum_{i=0}^{d-1} \{ (h_d+ \cdots+ h_{d-i})-(h_0+\cdots+h_i)\}.$$
Suppose that $\Delta$ is uniformly CM.
Then we have a short exact sequence
\begin{align}
\label{4-1}
0 \longrightarrow K[\Delta] \stackrel \phi \longrightarrow \omega_{K[\Delta]} \longrightarrow C=\coker \phi \longrightarrow 0.
\end{align}
By \eqref{2-5},
$\delta(\Delta)$ is the multiplicity of $C$.
Since $\type(\Delta)$ is equal to the number of minimal generators of $\omega_{K[\Delta]}$, the number of minimal generators of $C$ is $\type(\Delta)-1$.
Then, since the number of minimal generators of a CM $S$-module is smaller than or equal to its multiplicity, we have the following property.

\begin{lemma}
\label{4.1}
Let $\Delta$ be a uniformly CM simplicial complex.
Then $\type(\Delta) -1 \leq \delta(\Delta)$.
Moreover, equality holds if and only if $\Delta$ is almost Gorenstein*.
\end{lemma}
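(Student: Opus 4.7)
The proof will essentially unpack the discussion immediately preceding the lemma statement. By Theorem \ref{main1}, the uniformly Cohen--Macaulay hypothesis furnishes a degree-$0$ injection $\phi: K[\Delta]\to \cano_{K[\Delta]}$, so we have the short exact sequence (\ref{4-1}) with $C=\coker \phi$. The plan has three ingredients: (a) a multiplicity computation showing $e(C)=\delta(\Delta)$, (b) a minimal-generator count showing $\mu(C)=\type(\Delta)-1$, and (c) the standard inequality $\mu(C)\leq e(C)$ for a Cohen--Macaulay graded $S$-module of positive Krull dimension. Note that the $a$-invariant vanishes throughout: uniform CM-ness forces $\widetilde H_{d-1}(\Delta)\ne 0$ by Theorem \ref{main1}(iii) (since $\dim_K H_{d-1}(\cost_\Delta(F))\geq 0$), and then Hochster's formula gives $a=0$, so $\cano_{K[\Delta]}$ is concentrated in nonnegative degrees.

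For step (a), the Hilbert series of $C$ is the one displayed in (\ref{2-5}); evaluating its numerator at $t=1$ gives exactly $\delta(\Delta)$, which equals $e(C)$ since $C$ is Cohen--Macaulay of Krull dimension $d-1$ by \cite[Lemma 3.1]{GTT} (when $C=0$, i.e.\ $\Delta$ is Gorenstein*, both $\delta(\Delta)$ and $\type(\Delta)-1$ vanish and the statement is trivial). For step (b), tensoring (\ref{4-1}) with $S/\mideal$ yields the right-exact sequence
\[
K[\Delta]/\mideal K[\Delta] \longrightarrow \cano_{K[\Delta]}/\mideal \cano_{K[\Delta]} \longrightarrow C/\mideal C \longrightarrow 0,
\]
whose middle term has $K$-dimension $\type(\Delta)$. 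The image of $1\in K[\Delta]$ is $\phi(1)\in (\cano_{K[\Delta]})_0$, which is nonzero (by injectivity of $\phi$) and lies outside $(\mideal \cano_{K[\Delta]})_0 = 0$ because $a=0$; hence $\phi(1)$ extends to a minimal generator of $\cano_{K[\Delta]}$ and the leftmost arrow is injective on the $K$-basis $\{\overline{1}\}$. It follows that $\mu(C)=\dim_K C/\mideal C = \type(\Delta) - 1$.

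For step (c), one reduces $C$ modulo a generic linear system of parameters $\theta_1,\dots,\theta_{d-1}$; since $C$ is Cohen--Macaulay, $C/(\theta_1,\dots,\theta_{d-1})C$ is an artinian $K$-vector space of dimension $e(C)$ which surjects onto $C/\mideal C$, giving $\mu(C)\leq e(C)$. Combining (a)--(c) yields $\type(\Delta)-1\leq \delta(\Delta)$. Equality holds if and only if $\mu(C)=e(C)$, which is exactly the almost Gorenstein condition for the specific sequence (\ref{4-1}); together with the automatic non-vanishing of $\widetilde H_{d-1}(\Delta)$ this is precisely the almost Gorenstein* property. The only delicate point I expect to dwell on is step (b): it is there that the degree-$0$ hypothesis on $\phi$ and the vanishing $a$-invariant must be used together to ensure $\phi(1)$ is genuinely a minimal generator of the canonical module rather than being absorbed into $\mideal \cano_{K[\Delta]}$.
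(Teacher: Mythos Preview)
Your proposal is correct and follows essentially the same approach as the paper: the paper's argument is the paragraph immediately preceding the lemma, which invokes the short exact sequence \eqref{4-1}, identifies $e(C)=\delta(\Delta)$ via \eqref{2-5}, asserts $\mu(C)=\type(\Delta)-1$, and applies the inequality $\mu\leq e$ for CM modules. You have simply fleshed out the justification for $\mu(C)=\type(\Delta)-1$ (the point about $\phi(1)$ being a genuine minimal generator because $(\mideal\,\cano_{K[\Delta]})_0=0$), which the paper leaves implicit.
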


\begin{corollary}
\label{4.2}
A simplicial complex $\Delta$ is almost Gorenstein*
if and only if it is uniformly CM and $\delta(\Delta)=\type(\Delta)-1$.
\end{corollary}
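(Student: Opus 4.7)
The plan is to read off Corollary \ref{4.2} from Lemma \ref{4.1}, with the only extra input being that the almost Gorenstein* property forces the uniformly CM property. Since Lemma \ref{4.1} already presupposes the uniformly CM hypothesis, the work is concentrated in the ``only if'' direction.

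For the implication ``uniformly CM and $\delta(\Delta)=\type(\Delta)-1$ $\Rightarrow$ almost Gorenstein*'', there is nothing to do: this is precisely the second assertion of Lemma \ref{4.1}.

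For the converse direction, the first step is to verify that almost Gorenstein* simplicial complexes are uniformly CM, which is the observation already sketched in the introduction. The argument I would give is this: by definition $\Delta$ is CM, $\widetilde H_{\dim \Delta}(\Delta) \ne 0$, and there is a degree-$0$ short exact sequence $0 \to K[\Delta] \to \omega_{K[\Delta]}(-a) \to C \to 0$ satisfying the multiplicity-equals-minimal-generators condition. Hochster's formula \cite[II Theorem 4.1]{St} computes $(\omega_{K[\Delta]})_0$ in terms of $\widetilde H_{\dim \Delta}(\Delta)$, and the non-vanishing of this homology group forces $a = 0$. Hence the injection takes the form $K[\Delta] \hookrightarrow \omega_{K[\Delta]}$ of degree $0$, which is condition (ii) of Theorem \ref{main1}, so $\Delta$ is uniformly CM. Once uniformly CM is established, the equality $\delta(\Delta) = \type(\Delta) - 1$ is exactly the equality case of Lemma \ref{4.1}.

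There is no real obstacle here; the corollary is essentially a repackaging of Lemma \ref{4.1} once one notes that the uniformly CM hypothesis in that lemma is automatic for almost Gorenstein* complexes. If anything, the only subtlety worth flagging explicitly in the write-up is the use of Hochster's formula (together with the standing hypothesis that the top homology is non-zero) to see that the twist $(-a)$ in the defining sequence of ``almost Gorenstein'' reduces to a degree-$0$ injection, so that Theorem \ref{main1} applies.
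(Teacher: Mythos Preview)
Your proposal is correct and matches the paper's approach exactly: the corollary is presented as an immediate consequence of Lemma \ref{4.1}, with the only additional ingredient being that almost Gorenstein* implies uniformly CM, which the paper already observes in the introduction and at the start of \S3.1 via Hochster's formula (forcing $a=0$) and Theorem \ref{main1}(ii). There is nothing to add or correct.
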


We later use the following fact.

\begin{lemma}
\label{4.3}
Let $\Delta$ be a $(d-1)$-dimensional almost Gorenstein* simplicial complex on $[n]$
with the $h$-vector $h(\Delta)=(h_0,h_1,\dots,h_d)$.
Then
$$\dim_K \Tor_{n-d}^S (K[\Delta],K)_{n-k}= (h_d+ \cdots +h_{d-k})-(h_0+\cdots+h_k)$$
for all $k \geq 1$.
\end{lemma}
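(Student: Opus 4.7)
The plan is to convert the stated Tor dimension into a count of minimal generators of $\omega_{K[\Delta]}$ in the relevant degree, then transfer that count to $C$ via the defining short exact sequence \eqref{4-1}, and finally read the answer off from formula \eqref{2-5} using the equality $e(C)=\dim_K C/\mideal C$ supplied by Corollary \ref{4.2}.

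First, I invoke the duality of minimal free resolutions for graded Cohen--Macaulay modules over $S$: since $K[\Delta]$ is CM of codimension $n-d$, applying $\Hom_S(-,S(-n))$ to its minimal free resolution produces the minimal free resolution of $\omega_{K[\Delta]}$ with the order of terms reversed. In particular, if $F_{n-d}=\bigoplus_j S(-j)^{\beta_{n-d,j}}$ is the last free module in the resolution of $K[\Delta]$, then $\Hom_S(F_{n-d},S(-n))=\bigoplus_j S(j-n)^{\beta_{n-d,j}}$ is the zeroth free module in the resolution of $\omega_{K[\Delta]}$, which yields the identification
$$\dim_K \Tor_{n-d}^S(K[\Delta],K)_{n-k}=\dim_K(\omega_{K[\Delta]}/\mideal\omega_{K[\Delta]})_k$$
for every $k$.

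Next, I tensor the short exact sequence \eqref{4-1} with $K=S/\mideal$ to obtain
$$K[\Delta]/\mideal K[\Delta] \longrightarrow \omega_{K[\Delta]}/\mideal\omega_{K[\Delta]} \longrightarrow C/\mideal C \longrightarrow 0.$$
Because $K[\Delta]/\mideal K[\Delta]=K$ is concentrated in degree $0$, in every degree $k\geq 1$ the leftmost term vanishes, giving $\dim_K(\omega_{K[\Delta]}/\mideal\omega_{K[\Delta]})_k=\dim_K(C/\mideal C)_k$, and reducing the lemma to computing the number of minimal generators of $C$ in each degree $k\geq 1$. For this, I use Corollary \ref{4.2} to get $e(C)=\dim_K C/\mideal C$. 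Since $K$ is infinite and $\dim C=d-1$, I can choose a linear system of parameters $\theta_1,\ldots,\theta_{d-1}$ for $C$ consisting of linear forms and set $\bar C=C/(\theta_1,\ldots,\theta_{d-1})C$. Then $\dim_K \bar C=e(C)$, while the inclusion $(\theta_1,\ldots,\theta_{d-1})\subseteq\mideal$ yields $\bar C/\mideal\bar C=C/\mideal C$; the assumed equality therefore forces $\mideal\bar C=0$, so $\bar C\cong C/\mideal C$ as graded $K$-vector spaces. From \eqref{2-5}, the Hilbert series of $\bar C$ equals
$$\sum_{i=0}^{d-1}\bigl[(h_d+\cdots+h_{d-i})-(h_0+\cdots+h_i)\bigr]t^i,$$
and reading off the coefficient of $t^k$ combines with the previous two paragraphs to produce the claimed formula.

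The only point that demands care is the duality step: one must verify that the complex $\Hom_S(F_\bullet,S(-n))$ is itself a \emph{minimal} free resolution of $\omega_{K[\Delta]}$ (which is standard for CM modules) and correctly track the degree shift $j\mapsto n-j$. Everything else is formal manipulation with the Hilbert series of $C$.
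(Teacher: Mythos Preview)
Your argument is correct and follows essentially the same route as the paper's proof: identify $\Tor_{n-d}^S(K[\Delta],K)_{n-k}$ with $(\omega_{K[\Delta]}/\mideal\,\omega_{K[\Delta]})_k$ by canonical-module duality, pass to $(C/\mideal C)_k$ in positive degrees via the sequence \eqref{4-1}, and then use the almost Gorenstein* hypothesis to conclude $C/\mideal C\cong C/\Theta C$ and read off the answer from \eqref{2-5}. The only tacit point is that your equality $\dim_K\bar C=e(C)$ uses that $C$ is Cohen--Macaulay, which the paper records earlier (before Proposition~\ref{2.6}, citing \cite[Lemma~3.1]{GTT}).
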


\begin{proof}
With the same notation as in \eqref{4-1},
we have
$$\Tor_{n-d}^S (K[\Delta],K)_{n-k} \cong \Tor_0^S (\omega_{K[\Delta]},K)_k \cong \Tor_0^S(C,K)_k$$
for all $k \geq 1$
(see \cite[I \S 12]{St} for the first isomorphism).
Let $\Theta$ be a linear system of parameters of $C$.
Since $C$ is a CM $S$-module whose multiplicity is equal to the number of its minimal generators, we have
$\Tor_0^S(C,K) \cong (C/\Theta C)$.
Then \eqref{2-5} implies
$$\dim_K \Tor_{n-d}^S (K[\Delta],K)_{n-k}=\dim_K (C/\Theta C)_k=(h_d+ \cdots +h_{d-k})-(h_0+\cdots+h_k)$$
for all $k \geq 1$, as desired.
\end{proof}

\subsection{Proof of the main result}
In this subsection, we prove Theorem \ref{main2}.
For a simplicial complex $\Delta$ on $[n]$
and a subset $W \subset [n]$,
let
$$\Delta|_W=\{F \in \Delta: F \subset W\}$$
be the {\em restriction} of $\Delta$ to $W$.
The Hochster's formula for $\Tor$-algebras \cite[Theorem 5.5.1]{BH}
gives a way to compute $\type(\Delta)$ from restrictions of $\Delta$.

\begin{lemma}[Hochster's formula]
\label{4.4}
Let $\Delta$ be a simplicial complex on $[n]$.
Then, for any non-negative integer $i \leq n$,
$\Tor_i^S(K[\Delta],K)=\bigoplus_{F \subset [n]} (\Tor_i^S(K[\Delta],K))_{\ee_F}$,
and for any $F \subset [n]$
one has
$$\Tor_i^S(K[\Delta],K)_{\ee_F} \cong \widetilde H_{\#F-i-1}(\Delta|_F).$$
\end{lemma}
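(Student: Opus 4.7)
The plan is to compute $\Tor_i^S(K[\Delta],K)$ by tensoring $K[\Delta]$ with the Koszul complex $\mathcal{K}_\bullet = \mathcal{K}_\bullet(x_1,\dots,x_n;S)$, which resolves $K$ as an $S$-module since $x_1,\dots,x_n$ is a regular sequence. Thus $\Tor_i^S(K[\Delta],K) = H_i(K[\Delta] \otimes_S \mathcal{K}_\bullet)$. Placing the generator $e_G \in \mathcal{K}_{\#G}$ in multidegree $\ee_G$ makes $\mathcal{K}_\bullet$ a $\ZZ^n$-graded complex, so the tensor product and its homology split as a direct sum of strands indexed by $\aaa \in \ZZ^n$.

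First I would show that only squarefree multidegrees $\aaa = \ee_F$ contribute, which both yields the stated direct-sum decomposition and reduces the problem to those cases. Whenever some coordinate $a_j$ of $\aaa$ is at least $2$, the multidegree-$\aaa$ strand is acyclic via a standard contracting homotopy that absorbs an extra $x_j$ into the Koszul variable $e_G$; equivalently, this is the fact that $K[\Delta]$ is a squarefree module in the sense of Yanagawa. Next I would identify the multidegree-$\ee_F$ strand. In homological degree $i$ it has $K$-basis $\{x_{F\setminus G}\otimes e_G : G \subseteq F,\ \#G = i,\ F\setminus G \in \Delta\}$, which is in bijection with the $(\#F-i-1)$-dimensional faces of $\Delta|_F$ via $x_{F\setminus G}\otimes e_G \leftrightarrow F\setminus G$. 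After this reindexing, the Koszul differential $e_G \mapsto \sum_{j\in G}(-1)^{\sigma(j,G)} x_j e_{G\setminus\{j\}}$ translates into the boundary map on the augmented oriented simplicial chain complex $\widetilde{C}_\bullet(\Delta|_F;K)$, with the top of the Koszul strand ($G = F$) supplying the augmentation term $\widetilde{C}_{-1} = K$. Taking homology then gives the claimed isomorphism $\Tor_i^S(K[\Delta],K)_{\ee_F} \cong \widetilde{H}_{\#F-i-1}(\Delta|_F)$.

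The main obstacle is sign bookkeeping: after fixing a linear order on $[n]$ to orient the simplices of $\Delta|_F$, one must verify that the Koszul sign conventions $(-1)^{\sigma(j,G)}$ agree with the simplicial boundary signs on each basis element, which requires a careful comparison of the two orientation conventions. Once that is in place, the squarefree vanishing via the contracting homotopy and the face-indexing bijection are essentially tautological.
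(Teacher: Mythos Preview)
The paper does not give its own proof of this lemma: it is quoted as the classical Hochster formula and referenced to \cite[Theorem 5.5.1]{BH}. Your sketch is the standard argument behind that reference, and it is essentially correct.

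One point to tighten: under your bijection $x_{F\setminus G}\otimes e_G \leftrightarrow F\setminus G$, the Koszul differential (which lowers $\#G$ by one) sends the face $F\setminus G$ to the \emph{larger} faces $(F\setminus G)\cup\{j\}$ for $j\in G$. So what you obtain is the augmented simplicial \emph{cochain} complex of $\Delta|_F$, not the chain complex, and the homology of the strand is $\widetilde H^{\#F-i-1}(\Delta|_F)$. Over a field this is isomorphic to $\widetilde H_{\#F-i-1}(\Delta|_F)$, so the conclusion stands, but you should state it that way rather than claiming the Koszul differential matches the simplicial boundary. Apart from this cosmetic correction, the squarefree-vanishing step and the face-indexing bijection are exactly as in the standard proof.
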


Note that we consider that any simplicial complex contains the empty face $\emptyset$.
In particular, we consider $\widetilde H_{-1}(\Delta)=0$ if $\Delta \ne \{ \emptyset\}$ and
$\widetilde H_{-1}(\{\emptyset\})\cong K$ in the above lemma.

Next, we study basic properties of ridge sums.
Recall that a $(d-1)$-dimensional simplicial complex $\Delta$
is the ridge sum of $(d-1)$-dimensional simplicial complexes $\Gamma$ and $\Sigma$
if $\Delta=\Gamma \cup \Sigma$ and $\Gamma \cap \Sigma$ is a $(d-2)$-dimensional simplex.
See Figure 1.
\bigskip

\begin{center}
\includegraphics[width=120mm]{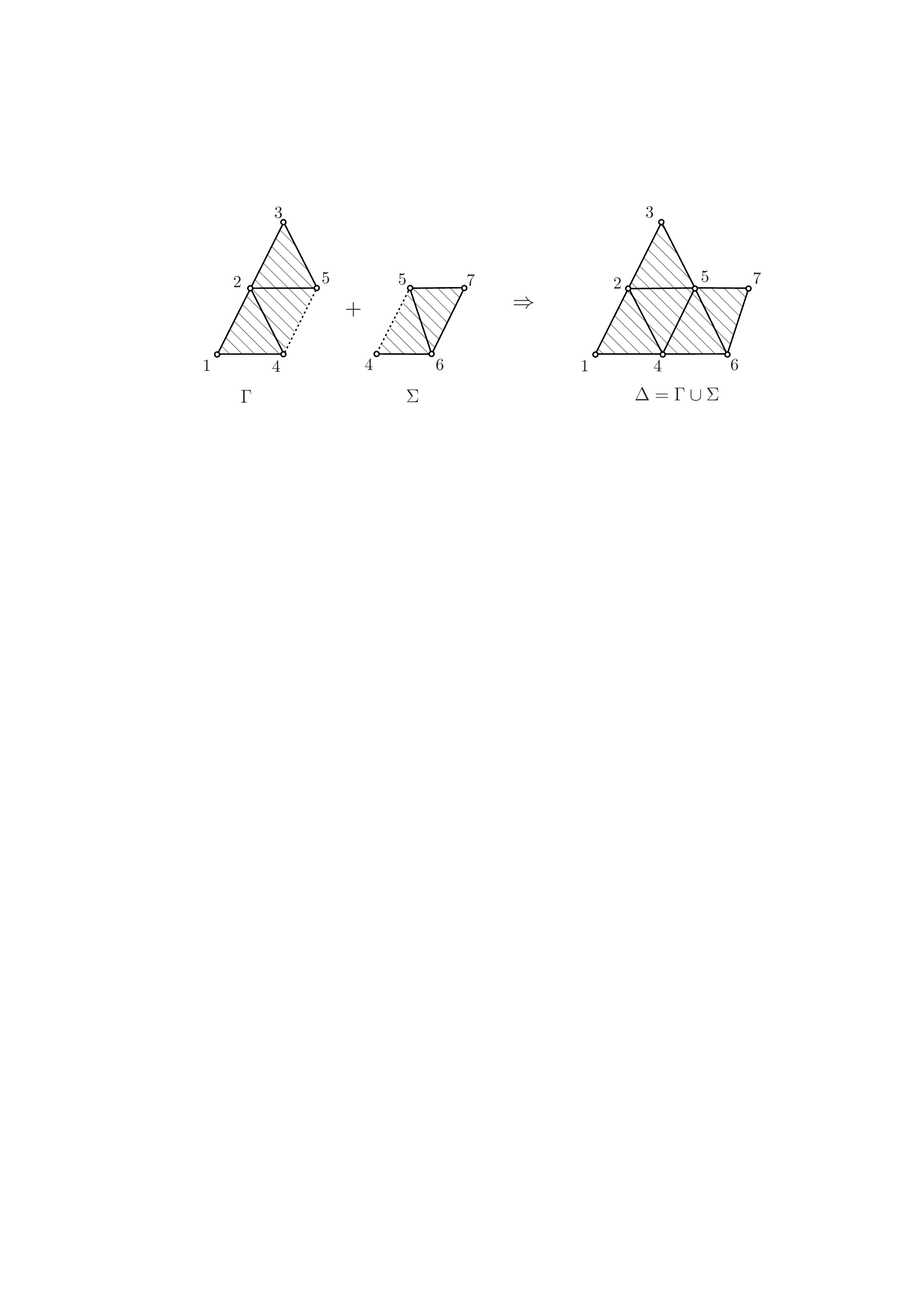}\\
Figure 1: The ridge sum of $\Gamma$ and $\Sigma$
\end{center}

\begin{lemma}
\label{4.5}
Let $\Delta$ be a simplicial complex on $[n]$ of dimension $d-1 \geq 1$.
Suppose that $\Delta$ is the ridge sum of $(d-1)$-dimensional simplicial complexes
$\Gamma$ and $\Sigma$.
Then
\begin{enumerate}
\item[(i)] $h(\Delta)=h(\Gamma)+h(\Sigma)+(-1,1,0,\dots,0)$.
\item[(ii)] $\mult(\Delta)=\mult(\Gamma)+\mult(\Sigma) +2$.
\item[(iii)] $\Delta$ is CM if and only if both $\Gamma$ and $\Sigma$ are CM.
\item[(iv)] $\Delta$ is uniformly CM if and only if both $\Gamma$ and $\Sigma$ are uniformly CM.
\item[(v)] Suppose that $\Delta$ is CM.
If neither $\Gamma$ nor $\Sigma$ is a simplex,
then $\type(\Delta)=\type(\Gamma)+\type(\Sigma)+1$.
\item[(vi)] $\Delta$ is almost Gorenstein*
if and only if both $\Gamma$ and $\Sigma$ are almost Gorenstein*.
\end{enumerate}
\end{lemma}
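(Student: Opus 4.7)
For (i), inclusion–exclusion on $f$-vectors gives $f_k(\Delta)=f_k(\Gamma)+f_k(\Sigma)-f_k(\langle R\rangle)$ for every $k$, and $\langle R\rangle$ being a $(d-2)$-simplex contributes $\sum_{k=0}^{d-1}\binom{d-1}{k}(t-1)^{d-k}=t^d-t^{d-1}$ to the defining identity of the $h$-vector, yielding the stated correction $(-1,1,0,\dots,0)$. For (ii), I plug the $h$-vector identity of (i) into the definition of $\delta$: the $-1$ in $h_0$ contributes $+1$ only at $i=0$ on the low telescope side, while the $+1$ in $h_1$ contributes $+1$ only at $i=d-1$ on the high telescope side, for a net $\delta(\Delta)-\delta(\Gamma)-\delta(\Sigma)=2$.

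For (iii) and (iv), I exploit that $\Gamma\cap\Sigma=\langle R\rangle$ is contractible, so Mayer--Vietoris splits $\widetilde H_k(\Delta)\cong\widetilde H_k(\Gamma)\oplus\widetilde H_k(\Sigma)$ in every degree; the same splitting applies to $\lk_\Delta(F)=\lk_\Gamma(F)\cup\lk_\Sigma(F)$ when $F\in\langle R\rangle$ (with intersection the simplex $\lk_{\langle R\rangle}(F)$), while $\lk_\Delta(F)=\lk_\Gamma(F)$ for $F\in\Gamma$ with $F\not\subset R$. Reisner's criterion then gives the forward direction of (iii), and Lemma~\ref{3.1} provides the converse. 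For (iv), the same Mayer--Vietoris applied to $\cost_\Delta(F)=\cost_\Gamma(F)\cup\Sigma$ (for $F$ a facet of $\Gamma$, automatically $F\not\subset R$) yields
\[
\dim_K H_{d-1}(\Delta)-\dim_K H_{d-1}(\cost_\Delta(F))=\dim_K H_{d-1}(\Gamma)-\dim_K H_{d-1}(\cost_\Gamma(F)),
\]
so the strict inequality of Theorem~\ref{main1}(iii) for $\Delta$ at $F$ matches that for $\Gamma$ at $F$, and symmetrically for facets of $\Sigma$.

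Part (v) is the principal obstacle. My plan is to dualize the restriction sequence of Stanley--Reisner rings
\[
0\longrightarrow K[\Delta]\longrightarrow K[\Gamma]\oplus K[\Sigma]\longrightarrow K[\langle R\rangle]\longrightarrow 0
\]
(whose exactness reflects $I_\Delta=I_\Gamma\cap I_\Sigma$ and $I_\Gamma+I_\Sigma=I_{\langle R\rangle}$) by applying $\Ext_S^{\bullet}(-,S(-n))$. Because $K[\langle R\rangle]$ is Cohen--Macaulay of Krull dimension $d-1$ rather than $d$, the long exact Ext sequence collapses to
\[
0\longrightarrow\omega_{K[\Gamma]}\oplus\omega_{K[\Sigma]}\longrightarrow\omega_{K[\Delta]}\longrightarrow\omega_{K[\langle R\rangle]}\longrightarrow 0.
\]
Tensoring with $K$ over $S$ and invoking the Tor long exact sequence reduces the identity $\type(\Delta)=\type(\Gamma)+\type(\Sigma)+1$ to the vanishing of the connecting map $\partial\colon\Tor_1^S(\omega_{K[\langle R\rangle]},K)\to\Tor_0^S(\omega_{K[\Gamma]}\oplus\omega_{K[\Sigma]},K)$. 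By Gr\"abe's description (Lemma~\ref{2.1}) the unique generator of $\omega_{K[\langle R\rangle]}$ sits in multidegree $\ee_R$; I would lift it to a class $\tilde g\in H_{d-1}(\Delta,\cost_\Delta(R))$ and check, for each $i\not\in R$, that $x_i\tilde g$ factors through the map $\imath^*$ from some class in multidegree $\ee_R$ of $\omega_{K[\Gamma]}$ (resp.\ $\omega_{K[\Sigma]}$) into multidegree $\ee_{R\cup\{i\}}$. The hypothesis that neither $\Gamma$ nor $\Sigma$ is a simplex is precisely what keeps the relative homologies $H_{d-1}(\Gamma,\cost_\Gamma(R))$ and $H_{d-1}(\Sigma,\cost_\Sigma(R))$ nonzero enough to supply the lift.

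For (vi), I combine the previous parts with Corollary~\ref{4.2} and Lemma~\ref{4.1}. Assuming neither $\Gamma$ nor $\Sigma$ is a simplex, if $\Delta$ is almost Gorenstein* then (iv) and Theorem~\ref{main1} make $\Gamma,\Sigma$ uniformly CM with nonzero top reduced homology; (ii) and (v) then rearrange $\type(\Delta)=\delta(\Delta)+1$ into $(\type(\Gamma)-\delta(\Gamma)-1)+(\type(\Sigma)-\delta(\Sigma)-1)=0$, where each summand is nonpositive by Lemma~\ref{4.1}, forcing both to vanish and hence both factors to be almost Gorenstein* by Corollary~\ref{4.2}. The converse reverses the same chain of identities. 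Finally, in the degenerate case where one of $\Gamma,\Sigma$ (say $\Gamma$) is a simplex, a direct check shows $\cost_\Delta(F)$ at the unique facet $F$ of $\Gamma$ is not pure of dimension $d-1$ (each face $F\setminus\{r\}$ with $r\in R$ contains the ``non-ridge'' vertex of $F$, which cannot lie in $V_\Sigma$, so becomes a $(d-2)$-dimensional facet of $\cost_\Delta(F)$), so $\Delta$ is not uniformly CM and therefore not almost Gorenstein*; a simplex itself is also never almost Gorenstein* because its top reduced homology vanishes, so both sides of (vi) are false in this case.
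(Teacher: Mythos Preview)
Your arguments for (i)--(iv) and (vi) are correct and close in spirit to the paper's, though with some pleasant variations: for (iii) you run Mayer--Vietoris on reduced homology and invoke Reisner directly, whereas the paper takes the Tor long exact sequence of the Mayer--Vietoris short exact sequence $0\to K[\Delta]\to K[\Gamma]\oplus K[\Sigma]\to K[\langle R\rangle]\to 0$ and argues with degrees of Tor; for (iv) you use the homological criterion Theorem~\ref{main1}(iii), while the paper just observes that $\cost_\Delta(F)$ is again a ridge sum and applies (iii). Your explicit treatment of the degenerate ``one factor is a simplex'' case in (vi) is fine; the paper disposes of it in one line by noting that a uniformly CM complex is never a simplex, so (iv) already rules it out.

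The genuine gap is in (v). Your dualized short exact sequence $0\to\omega_{K[\Gamma]}\oplus\omega_{K[\Sigma]}\to\omega_{K[\Delta]}\to\omega_{K[\langle R\rangle]}\to 0$ is correct, and the reduction to the vanishing of $\partial\colon\Tor_1^S(\omega_{K[\langle R\rangle]},K)\to(\omega_{K[\Gamma]}\oplus\omega_{K[\Sigma]})\otimes_S K$ is the right move. But the Gr\"abe-based lifting sketch you offer (``nonzero enough to supply the lift'') is not a proof: you have not actually shown that each $x_i\tilde g$ lies in $\mathfrak m(\omega_{K[\Gamma]}\oplus\omega_{K[\Sigma]})$, and it is not clear how the non-simplex hypothesis enters your lift. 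The clean way to finish is a pure degree argument: $\Tor_1^S(\omega_{K[\langle R\rangle]},K)$ is concentrated in $\ZZ$-degree $d$, while by graded local duality $\big(\omega_{K[\Gamma]}\otimes_S K\big)_d\cong\Tor_{n-d}^S(K[\Gamma],K)_{n-d}$, and Hochster's formula gives $\Tor_{n-d}^S(K[\Gamma],K)_{\ee_F}\cong\widetilde H_{-1}(\Gamma|_F)=0$ for every $F$ with $\#F=n-d$, precisely because a non-simplex of dimension $d-1$ has at least $d+1$ vertices so $\Gamma|_F\ne\{\emptyset\}$. This is exactly the dual of the paper's argument, which works on the original (undualized) Tor sequence and shows the map $\psi\colon\Tor_{n-d}^S(K[\Gamma],K)\oplus\Tor_{n-d}^S(K[\Sigma],K)\to\Tor_{n-d}^S(K[\langle R\rangle],K)$ is zero for the same Hochster reason. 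So your route is valid, but you should replace the Gr\"abe lift by this degree-and-Hochster computation.
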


\begin{proof}
Let $\Gamma \cap \Sigma=\langle W \rangle$.
Note that $\#W=d-1$ and $K[\langle W \rangle ] \cong S/(x_i:i\not\in W)$.
Then the Mayer--Vietoris short exact sequence
\begin{align}
\label{3-1}
0 \longrightarrow K[\Delta] \longrightarrow K[\Gamma] \bigoplus K[\Sigma] \longrightarrow K[\langle W \rangle] \longrightarrow 0
\end{align}
says
\begin{align*}
\frac 1 {(1-t)^d} \left( \sum_{i=0}^d h_i(\Delta)t^i \right)
= \frac 1 {(1-t)^d} \left( \sum_{i=0}^d \big(h_i(\Gamma) + h_i(\Sigma)\big)t^i \right)
-\frac 1 {(1-t)^{d-1}}.
\end{align*}
The above equation implies (i) and (ii).

We prove (iii).
The `if' part is a special case of Lemma \ref{3.1}.
We prove the `only if' part.
Recall that a graded $K$-algebra $R=S/I$ of Krull dimension $d$ is CM if and only if $\Tor_i^S(R,K)=0$ for $i > n-d$.
Suppose that $\Delta$ is CM.
Since $K[\Delta]$ is CM of Krull dimension $d$ and $K[\langle W\rangle]$ is CM of Krull dimension $d-1$,
the short exact sequence \eqref{3-1} says
$\Tor_{i}^S(K[\Gamma],K)=
\Tor_{i}^S(K[\Sigma],K)=0$ for $i > n-d+1$.
What we must prove is
$\Tor_{n-d+1}^S(K[\Gamma],K)=
\Tor_{n-d+1}^S(K[\Sigma],K)=0$.

The short exact sequence \eqref{3-1}
induces the long exact sequence
\begin{align}
\nonumber
0 
&
\longrightarrow
\Tor_{n-d+1}^S(K[\Gamma],K)
\bigoplus
\Tor_{n-d+1}^S(K[\Sigma],K)
\longrightarrow
\Tor_{n-d+1}^S(K[\langle W \rangle],K)
\\
\label{3-2}
&\longrightarrow
\Tor_{n-d}^S(K[\Delta],K)
\longrightarrow
\Tor_{n-d}^S(K[\Gamma],K)
\bigoplus
\Tor_{n-d}^S(K[\Sigma],K)
\\
&
\nonumber
\stackrel{\psi}{\longrightarrow}
\Tor_{n-d}^S(K[\langle W \rangle],K)
\longrightarrow \cdots.
\end{align}
Since $K[\langle W \rangle] \cong S/(x_i: i \not \in W)$,
we have
$$\Tor_{n-d+1}^S(K[\langle W \rangle],K)=\Tor_{n-d+1}^S(K[\langle W \rangle],K)_{n-d+1}.$$
Then the long exact sequence \eqref{3-2} says
$$\Tor_{n-d+1}^S(K[\Gamma],K)=\Tor_{n-d+1}^S(K[\Gamma],K)_{n-d+1}$$
and
$$\Tor_{n-d+1}^S(K[\Sigma],K)=\Tor_{n-d+1}^S(K[\Sigma],K)_{n-d+1}.$$
On the other hand, for any graded ideal $I \subset S$,
one has $\Tor_{n-d+1}^S(S/I,K)_{n-d+1} \ne 0$ if and only if
$I$ contains $n-d+1$ linearly independent linear forms.
Since $\Gamma$ and $\Sigma$ have dimension $d-1$,
if we write $K[\Gamma]=S/I_{\Gamma}$ and $K[\Sigma]=S/I_\Sigma$ then $I_\Gamma$ and $I_\Sigma$ contain
at most $n-d$ linearly independent linear forms.
Thus
$\Tor_{n-d+1}^S(K[\Gamma],K)=
\Tor_{n-d+1}^S(K[\Sigma],K)=0$.

(iv) is a direct consequence of (iii)
since, for any facet $F \in \Delta$,
one has $\cost_\Delta(F)=\cost_\Gamma(F) \cup \cost_\Sigma(F)$
and $\cost_\Gamma(F) \cap \cost_\Sigma(F)=\langle W \rangle$,
where we consider that $\cost_\Gamma(F)=\Gamma$ when $F \not \in \Gamma$.

Next, we prove (v).
Suppose that $\Delta$ is CM.
Then $\Gamma$ and $\Sigma$ are CM by (iii).
We claim that the map $\psi$ in \eqref{3-2} is the zero map.
Since
$K[\langle W \rangle] \cong S/(x_i: i \not \in W)$,
$\Tor_{n-d}(K[\langle W \rangle],K)$ has non-zero elements only in degree $n-d$.
Thus
\begin{align}
\label{3-3}
\Tor_{n-d}^S(K[\langle W \rangle],K)
=\bigoplus_{\substack{F \subset [n] \\ \# F =n-d}}
\left( \Tor_{n-d}^S(K[\langle W \rangle],K)
\right)_{\ee_F}.
\end{align}
On the other hand, Hochster's formula says that,
for any $F \subset [n]$ with $\# F=n-d$,
$$
\Tor_{n-d}^S(K[\Gamma],K)_{\ee_F} \cong \widetilde H_{-1}(\Gamma |_F)
\mbox{ and }
\Tor_{n-d}^S(K[\Sigma],K)_{\ee_F} \cong \widetilde H_{-1}(\Sigma |_F).
$$
Since $\Gamma$ and $\Sigma$ are not simplexes,
$\Gamma|_F \ne \{\emptyset\} $ and $\Sigma|_F \ne \{\emptyset\}$
if $\#F \geq n-d$.
Thus we have
$$
\Tor_{n-d}^S(K[\Gamma],K)_{\ee_F}
=\Tor_{n-d}^S(K[\Sigma],K)_{\ee_F}=0$$
for all $F \subset [n]$ with $\# F =n-d$.
This fact and \eqref{3-3} say that $\psi$ is the zero map.
Then, by \eqref{3-2}, we have
{\small
\begin{align*}
\type (\Delta)=&
\dim_K \Tor_{n-d}^S (K[\Delta],K)\\
=&\dim_K \big(\Tor_{n-d}^S (K[\Gamma],K) \bigoplus \Tor_{n-d}^S(K[\Sigma],K) \big)+ \dim_K \Tor_{n-d+1}^S(K[\langle W \rangle],K)\\
=& \type(\Gamma)+\type(\Sigma) + 1,
\end{align*}
}
as desired.

It remains to prove (vi).
By (iv), we may assume that $\Delta$, $\Gamma$ and $\Sigma$ are uniformly CM.
Note that this guarantees that $\Gamma$ and $\Sigma$ are not simplexes.
By (ii) and (v) ,
$$\mult(\Delta)=\mult(\Gamma)+\mult(\Sigma)+2$$
and
$$\type(\Delta)-1=\type(\Gamma)+\type(\Sigma).$$
Then the assertion follows from Lemma \ref{4.1}.
\end{proof}

\begin{lemma}
\label{ridgesum}
Let $\Delta$ be a $(d-1)$-dimensional CM simplicial complex on $[n]$.
If $\Tor_{n-d}^S(K[\Delta],K)_{n-d+1} \ne 0$
then there are $(d-1)$-dimensional simplicial complexes $\Gamma$ and $\Sigma$
such that $\Delta$ is the ridge sum of $\Gamma$ and $\Sigma$.
\end{lemma}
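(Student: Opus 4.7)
The plan is to extract the ridge-sum decomposition directly from the combinatorial data given by Hochster's formula. By Lemma~\ref{4.4}, the hypothesis $\Tor_{n-d}^S(K[\Delta],K)_{n-d+1}\neq 0$ furnishes a subset $F\subseteq[n]$ with $\#F=n-d+1$ such that $\widetilde H_0(\Delta|_F)\neq 0$, i.e., $\Delta|_F$ is disconnected. I would then partition $F=V_1\sqcup V_2$ with $V_1,V_2\neq\emptyset$ so that no face of $\Delta|_F$ meets both pieces (take $V_1$ to be the vertex set of one connected component of $\Delta|_F$ and put the rest of $F$ into $V_2$). Setting $W:=[n]\setminus F$ (so $\#W=d-1$), I would define
\[
\Gamma:=\{G\in\Delta:G\cap V_2=\emptyset\},\qquad \Sigma:=\{G\in\Delta:G\cap V_1=\emptyset\}.
\]

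The routine verifications go as follows. $\Gamma\cup\Sigma=\Delta$: any $G\in\Delta$ meeting both $V_1$ and $V_2$ would contain an edge of $\Delta|_F$ crossing the partition, which is impossible. The identity $\Gamma\cap\Sigma=\Delta|_W$ is tautological. Purity of the CM complex $\Delta$ gives $\dim\Gamma=\dim\Sigma=d-1$: each $V_i$ contains a vertex, and a facet of $\Delta$ through such a vertex must avoid the opposite $V_j$, hence lies in the corresponding subcomplex as a facet of dimension $d-1$.

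The essential step is to verify $\Delta|_W=\langle W\rangle$, equivalently $W\in\Delta$, so that $\Gamma\cap\Sigma$ is a $(d-2)$-dimensional simplex; this is the main obstacle. I would handle it by induction on $d$. The base case $d=2$ is immediate: disconnection of $\Delta|_F$ forces the unique vertex of $W$ to lie in $V(\Delta)$, for otherwise $\Delta=\Delta|_F$ would itself be disconnected, contradicting Cohen--Macaulayness. For $d\geq 3$, connectivity of $\Delta$ together with the absence of edges between $V_1$ and $V_2$ guarantees some $w\in W$ adjacent to vertices in both $V_1$ and $V_2$. Then $\lk_\Delta(w)$ is CM of dimension $d-2$, and $\lk_\Delta(w)|_F$ is disconnected because any path in $\lk_\Delta(w)|_F$ joining a $V_1$- and a $V_2$-neighbour of $w$ would lift to a path in $\Delta|_F$. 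After trimming to a subset of $V(\lk_\Delta(w))$ of the appropriate cardinality by discarding vertices from one side of the partition (which preserves disconnection), Hochster's formula applied to $\lk_\Delta(w)$ yields the analogous Tor non-vanishing, and the inductive hypothesis produces a $(d-3)$-face $U$ of $\lk_\Delta(w)$, so that $U\cup\{w\}$ is a $(d-2)$-face of $\Delta$. This face plays the role of the ridge in the decomposition of $\Delta$, possibly replacing the original $W$, and the desired ridge-sum presentation of $\Delta$ is then assembled around it.
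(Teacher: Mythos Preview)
Your setup---extracting $W$ of size $d-1$ via Hochster's formula, partitioning $F=[n]\setminus W$ as $V_1\sqcup V_2$, defining $\Gamma$ and $\Sigma$, and checking $\Gamma\cup\Sigma=\Delta$ with $\Gamma\cap\Sigma=\Delta|_W$---is correct and matches the paper's argument essentially verbatim. The divergence is entirely at the step $W\in\Delta$, and here the paper's route is both different and much shorter. The paper uses \emph{strong connectivity} of CM complexes: take facets $F_0\in\Gamma$ and $F_t\in\Sigma$ and a chain $F_0,\dots,F_t$ of facets with $\#(F_i\cap F_{i+1})=d-1$; at the first index $j$ with $F_j\in\Gamma$ and $F_{j+1}\in\Sigma$ one has $F_j\cap F_{j+1}\subset (V_1\cup W)\cap(V_2\cup W)=W$, whence $F_j\cap F_{j+1}=W$ by cardinality, so $W\in\Delta$. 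No induction is needed.

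Your inductive scheme, by contrast, has a genuine gap at the phrase ``the desired ridge-sum presentation of $\Delta$ is then assembled around it.'' The inductive hypothesis applied to $\lk_\Delta(w)$ only asserts that $\lk_\Delta(w)$ admits \emph{some} ridge-sum decomposition; it hands you a $(d-3)$-face $U$, but there is no reason $U=W\setminus\{w\}$. Indeed, once you trim, the complement of your trimmed set already contains discarded vertices from $V_1$ or $V_2$, so it is no longer $W\setminus\{w\}$; and even before trimming you have not shown $W\setminus\{w\}\subset V(\lk_\Delta(w))$. Knowing only that $U\cup\{w\}\in\Delta$ is not enough: for $U\cup\{w\}$ to serve as a ridge you must show that $\Delta|_{[n]\setminus(U\cup\{w\})}$ is disconnected, and nothing in your argument does this. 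The disconnection you possess is of $\lk_\Delta(w)$ restricted away from $U$, which says nothing about faces of $\Delta$ not containing $w$ (for instance, an edge $\{a,b\}$ with $a,b$ on opposite sides of $U$ in $\lk_\Delta(w)$ may well lie in $\Delta$ without $\{w,a,b\}$ being a face). So the induction does not close. A smaller issue: the claim that mere connectivity of $\Delta$ yields a vertex $w\in W$ adjacent to both $V_1$ and $V_2$ is not justified as stated---that is exactly the kind of fact strong connectivity delivers.
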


\begin{proof}
Since Hochster's formula says that 
$$\Tor_{n-d}^S(K[\Delta],K)_{n-d+1}\cong \bigoplus_{{F \subset[n],\ \#F=n-d+1}} \widetilde H_0(\Delta|_F),$$
there is a subset $W \subset [n]$ with $\#W=d-1$ such that
$\Delta|_{[n]\setminus W}$ has more than two connected components.
Let
$\Gamma'$ be a connected component of
$\Delta|_{[n]\setminus W}$
and let $\Sigma' =\{F \in \Delta|_{[n]\setminus W} : F \not \in \Gamma'\}$
be the complement of $\Gamma'$ in $\Delta|_{[n]\setminus W}$.
Let $V$ be the vertex set of $\Gamma'$ and $V^c$ the vertex set of $\Sigma'$.
We claim that $\Delta$ is the ridge sum of $\Gamma=\Delta|_{V \cup W}$
and $\Sigma=\Delta|_{V^c \cup W}$.

We first prove $\Delta=\Gamma \cup \Sigma$.
It is enough to prove that $\Delta \subset \Gamma \cup \Sigma$.
Let $F$ be a facet of $\Delta$.
Since $\Gamma'=\Delta|_{V}$ is a connected component of $\Delta|_{[n] \setminus W}$,
one has either $F \setminus W \subset V$ or $F \setminus W \subset V^c$,
which implies $F \in \Gamma \cup \Delta$.
Thus $\Delta \subset \Gamma \cup \Sigma$.
Next, it is clear that $\Gamma$ and $\Sigma$ have dimension $d-1$
since all facets of $\Delta$ have dimension $d-1$.
It remains to prove that $\Gamma \cap \Sigma$ is generated by $W$.
To prove this, what we must prove is that $W \in \Delta$.
Let $F$ and $G$ be facets of $\Delta$ such that $F \in \Gamma$ and $G \in \Sigma$.
Note that such facets exist since $\Gamma$ and $\Sigma$ are not empty.
Since $\Delta$ is CM,
$\Delta$ is strongly connected \cite[Proposition 11.7]{Bj}.
Thus there is a sequence of facets $F=F_0,F_1,\dots,F_t=G$
such that $\#F_i\cap F_{i+1}=d-1$ for $i=0,1,\dots,t-1$.
Then, since $\Gamma$ is a simplicial complex on $V \cup W$
and $\Sigma$ is a simplicial complex on $V^c \cup W$,
if we choose an integer $j$ such that $F_j \in \Gamma$ and $F_{j+1} \in \Sigma$,
then we have $F_j \cap F_{j+1} = W$.
Hence $W \in \Delta$.
\end{proof}

Now we prove Theorem \ref{main2}.

\begin{proof}[Proof of Theorem \ref{main2}]
Suppose $\widetilde H_{d-1}(\Delta) \not \cong K$.
Then $\dim_K \widetilde H_{d-1}(\Delta) >1$.
Since $h_d(\Delta)=\sum_{k=0}^d (-1)^{d-k} f_{k-1}(\Delta)=\dim_K \widetilde H_{d-1}(\Delta)$ by Reisner's criterion,
we have
$$\dim_K \Tor_{n-d}(K[\Delta],K)_{n-d+1}= h_d-1>0$$
by Lemma \ref{4.3}.
Then the desired statement follows from Lemmas \ref{4.5}(vi) and \ref{ridgesum}.
\end{proof}

\begin{remark}
A simplicial complex $\Delta$ is said to be {\em Gorenstein*} if $K[\Delta]$ is Gorenstein
and $\widetilde H_{\dim \Delta}(\Delta) \cong K$. Lemma \ref{4.5}(vi) says that a ridge sum of Gorenstein* simplicial complexes are almost Gorenstein*.
In particular, this gives an infinite family of Stanley--Reisner rings which are almost Gorenstein but not Gorenstein.
\end{remark}

\subsection{Almost Gorenstein* simplicial complexes of dimension $\leq 2$}

By Theorem \ref{main2},
to study almost Gorenstein* simplicial complexes
it is crucial to study indecomposable ones.
A typical example of an indecomposable almost Gorenstein* simplicial complex
is a Gorenstein* simplicial complex.
For $1$-dimensional simplicial complexes, that is graphs,
it is not hard to see that any indecomposable almost Gorenstein*
simplicial complex is actually Gorenstein* (i.e., it is a cycle).
Indeed, such a simplicial complex $\Delta$
has the $h$-vector $(1,h_1,1)$, which implies that $\delta(\Delta)=0$ and $K[\Delta]$ is Gorenstein.
Thus any almost Gorenstein* graph is a ridge sum of cycles, that is,
obtained from a cycle by taking a ridge sum with a cycle repeatedly (see Figure 2).
This fact and \cite[Theorem 10.4]{GTT} give the following characterization of the almost Gorenstein Stanley--Reisner rings of Krull dimension $2$.

\begin{center}
\includegraphics[width=40mm]{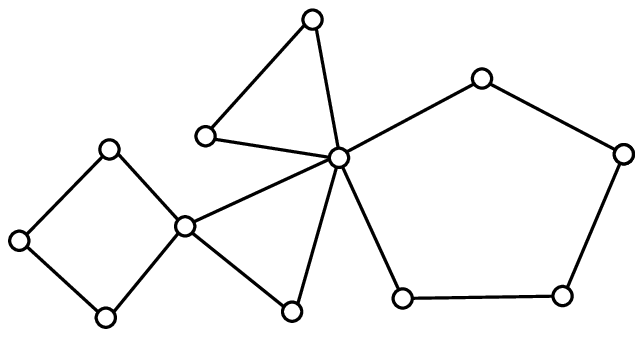}\\
Figure 2: A ridge sum of cycles
\end{center}

\begin{proposition}
\label{4.7}
Let $\Delta$ be a $1$-dimensional simplicial complex.
Then $K[\Delta]$ is almost Gorenstein if and only if either $\Delta$ is a tree or a ridge sum of cycles.
\end{proposition}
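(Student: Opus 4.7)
The plan is to split into two cases depending on whether $\widetilde H_1(\Delta)$ vanishes, since this is precisely what distinguishes the general almost Gorenstein property from the almost Gorenstein* property in dimension one.

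For the ``if'' direction I will handle the two possibilities separately. If $\Delta$ is a tree, I will appeal directly to \cite[Theorem 10.4]{GTT}, which characterizes the almost Gorenstein property for the relevant class of $1$-dimensional rings and covers the tree case. If $\Delta$ is a ridge sum of cycles, I will observe that a single cycle is a Gorenstein* $1$-dimensional simplicial complex (hence in particular almost Gorenstein*), and then iterate Lemma \ref{4.5}(vi) to conclude that $\Delta$ itself is almost Gorenstein*; in particular $K[\Delta]$ is almost Gorenstein.

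For the ``only if'' direction, I will start from the fact that $K[\Delta]$ almost Gorenstein forces Cohen--Macaulayness, so $\Delta$ must be a connected graph. If $\widetilde H_1(\Delta)=0$ then $\Delta$ contains no cycle and is therefore a tree. If $\widetilde H_1(\Delta)\ne 0$ then $\Delta$ is almost Gorenstein* by definition, and I will induct on $h_2(\Delta)=\dim_K \widetilde H_1(\Delta)$. The base case $h_2(\Delta)=1$ is the indecomposable case: the argument sketched in the text preceding the proposition shows that the $h$-vector is $(1,h_1,1)$, whence $\delta(\Delta)=0$, so $K[\Delta]$ is Gorenstein and $\Delta$ is a cycle. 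For $h_2(\Delta)>1$, Theorem \ref{main2} expresses $\Delta$ as a ridge sum $\Gamma\cup\Sigma$ of two almost Gorenstein* $1$-dimensional complexes; by Lemma \ref{4.5}(i) one has $h_2(\Delta)=h_2(\Gamma)+h_2(\Sigma)$, and since each summand must have non-vanishing top homology (being almost Gorenstein*) both $h_2(\Gamma)$ and $h_2(\Sigma)$ are strictly smaller than $h_2(\Delta)$, so the inductive hypothesis writes each as a ridge sum of cycles, giving the claim.

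The principal obstacle is the tree case, as this is where the proposition genuinely goes beyond the almost Gorenstein* framework developed in the paper and relies on the external input of \cite[Theorem 10.4]{GTT}. The graphs-with-cycles case, by contrast, is a clean consequence of Theorem \ref{main2} and Lemma \ref{4.5} together with the identification of indecomposable $1$-dimensional almost Gorenstein* complexes as cycles.
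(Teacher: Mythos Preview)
Your proposal is correct and follows essentially the same approach as the paper: the split into the cases $\widetilde H_1(\Delta)=0$ and $\widetilde H_1(\Delta)\ne 0$, the appeal to \cite[Theorem~10.4]{GTT} for trees, and the reduction of the almost Gorenstein* case to cycles via Theorem~\ref{main2} and Lemma~\ref{4.5} are exactly what the paper does (the paper merely states the latter reduction in the paragraph preceding the proposition rather than spelling out the induction on $h_2$). One small wording issue: $K[\Delta]$ for a $1$-dimensional $\Delta$ has Krull dimension $2$, so describe the cited result from \cite{GTT} as handling CM rings with $h_2=0$ rather than ``$1$-dimensional rings.''
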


\begin{proof}
If $\widetilde H_1(\Delta)$ is non-zero,
then $K[\Delta]$ is almost Gorenstein if and only if $\Delta$ is a ridge sum of cycles.
On the other hand, $\Delta$ is a tree
if and only if $\Delta$ is CM and $h_2(\Delta)=f_1(\Delta)-f_0(\Delta)+1=0$.
Then the assertion follows from \cite[Theorem 10.4]{GTT}
which proved that if $K[\Gamma]$ is CM and $h_2(\Gamma)=0$,
then $K[\Gamma]$ is almost Gorenstein.
\end{proof}

In dimension $2$,
there is an indecomposable almost Gorenstein* simplicial complex
which is not Gorenstein* (see Example \ref{5.6}).
Thus an almost Gorenstein* simplicial complex is not always a ridge sum of Gorenstein* simplicial complexes.
On the other hand,
the next statement and Theorem \ref{main2} provide a combinatorial criterion of the $2$-dimensional almost Gorenstein* simplicial complexes.

\begin{proposition}
\label{4.8}
Let $\Delta$ be a $2$-dimensional simplicial complex.
Then $\Delta$ is indecomposable almost Gorenstein* if and only if $\Delta$ is uniformly CM and $\widetilde H_2(\Delta) \cong K$.
\end{proposition}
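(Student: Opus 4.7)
The plan is to verify both directions of the equivalence. The $(\Rightarrow)$ direction is essentially immediate: if $\Delta$ is indecomposable almost Gorenstein*, then $\widetilde H_2(\Delta)\cong K$ by definition, and $\Delta$ is uniformly CM because every almost Gorenstein* simplicial complex is (as observed in the introduction, via Theorem~\ref{main1} applied to the defining short exact sequence~\eqref{1-1}).

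For the $(\Leftarrow)$ direction, suppose $\Delta$ is uniformly CM with $\widetilde H_2(\Delta)\cong K$. Theorem~\ref{main1} furnishes an injection $\phi:K[\Delta]\to\omega_{K[\Delta]}$ of degree $0$; let $C=\coker\phi$. By Corollary~\ref{4.2} it suffices to show $\delta(\Delta)=\type(\Delta)-1$, equivalently that the number of minimal generators of $C$ equals its multiplicity. First I would compute the Hilbert series of $C$: using $h_0(\Delta)=1$ together with $h_3(\Delta)=\dim_K\widetilde H_2(\Delta)=1$ (by Reisner's criterion), the standard formulas for the Hilbert series of $K[\Delta]$ and of $\omega_{K[\Delta]}$ (whose numerator is the reversed $h$-polynomial) yield
\[
\sum_{k\ge 0}(\dim_K C_k)\,t^k \;=\; \frac{(h_2-h_1)\,t}{(1-t)^2}.
\]
In particular $h_2\ge h_1$, and the multiplicity of $C$ is $\delta(\Delta)=h_2-h_1$.

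I would then split into cases. If $h_1=h_2$ then $C=0$, so $K[\Delta]$ is Gorenstein and $\Delta$ is trivially almost Gorenstein*. Otherwise $C\neq 0$ is Cohen--Macaulay of Krull dimension $2$ by \cite[Lemma~3.1]{GTT}. Since $K$ is infinite, pick a linear system of parameters $\theta_1,\theta_2$ for $C$. Then $C/(\theta_1,\theta_2)C$ has Hilbert series $(h_2-h_1)\,t$, hence is concentrated in degree $1$. The crucial observation is that $\mathfrak m\cdot C/(\theta_1,\theta_2)C=0$, since any product of a degree-$1$ element with an element of $\mathfrak m$ lies in degree $\ge 2$. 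Therefore $C/\mathfrak m C=C/(\theta_1,\theta_2)C$ has dimension $h_2-h_1$, so $C$ has exactly $h_2-h_1$ minimal generators, matching its multiplicity. This gives almost Gorenstein*, and indecomposability follows from $\widetilde H_2(\Delta)\cong K$.

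No serious obstacle arises; the whole argument is driven by Hilbert-series bookkeeping followed by a short Artinian-reduction step. The feature that makes this direct approach work only for $\dim\Delta\le 2$ is that when $d=3$ the numerator of $\Hilb(C;t)$ collapses to a single monomial, forcing $C/(\theta_1,\theta_2)C$ into one degree and thus forcing the number of minimal generators and the multiplicity of $C$ to coincide automatically. In higher dimensions the analogous numerator spans several degrees, so this automatic coincidence breaks and an extra combinatorial condition would be needed.
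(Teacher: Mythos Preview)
Your proof is correct and follows essentially the same route as the paper's. The paper's argument is the same Hilbert-series computation (using $h_3=1$) yielding $\Hilb(C;t)=(h_2-h_1)t/(1-t)^2$, from which it directly reads off that $C$ has exactly $h_2-h_1$ generators; your Artinian-reduction step simply makes explicit what the paper leaves as an immediate implication.
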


\begin{proof}
The only if part is obvious.
Suppose that $\Delta$ is uniformly CM and $\widetilde H_2(\Delta) \cong K$.
Since $h_3(\Delta)= \dim_K \widetilde H_2(\Delta)$,
the $h$-vector of $\Delta$ is the vector of the form $h(\Delta)=(1,h_1,h_2,1)$.
Let $\phi:K[\Delta] \to \omega_{K[\Delta]}$ be an injection and $C=\coker \phi$.
By \eqref{2-5},
the Hilbert series of $C$ is $ (h_2-h_1) t\over (1-t)^2$,
which implies that $C$ has $(h_2-h_1)$ generators of degree $1$.
Since $\delta(\Delta)=h_2-h_1$ is the multiplicity of $C$,
$K[\Delta]$ is almost Gorenstein.
\end{proof}

\subsection{Questions and examples}
We close this paper with a few questions and examples.
Let $\Delta$ be an almost Gorenstein* simplicial complex
with the $h$-vector $h(\Delta)=(h_0,h_1,\dots,h_d)$.
For $i=0,1,\dots,d-1$,
we define
$$ \eta_i(\Delta)= (h_d+ \cdots + h_{d-i}) - (h_0+\dots+h_i)$$
and
$$\eta_\Delta(t)= \eta_0 (\Delta)+ \eta_1 (\Delta)t + \cdots + \eta_{d-1}(\Delta)t^{d-1}.$$
Thus, as we saw in \eqref{2-5},
$\eta_\Delta(t)$ is the $h$-polynomial of the cokernel of the injection
$\phi: K[\Delta] \to \omega_{K[\Delta]}$
and $\delta(\Delta)=\eta_\Delta(1)$.
By the definition,
$\eta_\Delta(t)$ is symmetric,
that is, $\eta_k(\Delta)=\eta_{d-k-1}(\Delta)$ for all $k$.
Since the cokernel of $\phi$ looks to be of fundamental in the study of the almost Gorenstein* property,
to find various types of almost Gorenstein* simplicial complexes,
we ask the following problem.

\begin{problem}
\label{5.1}
Fix an integer $d$.
Characterize all symmetric polynomials 
$f=\eta_0 + \eta_1 t + \cdots + \eta_{d-1}t^{d-1} \in \ZZ_{\geq 0}[t]$
such that there is a $(d-1)$-dimensional almost Gorenstein* simplicial complex $\Delta$
with $\eta_\Delta(t)=f$.
\end{problem}

We give two examples.

\begin{example}
Lemma \ref{4.5} and \eqref{2-5} say that, if $\Delta$ is a ridge sum of $\eta$ Gorenstein* simplicial complexes of dimension $d-1$
then $\eta_\Delta(t)=\eta-1+(\eta-1) t^{d-1}$.
\end{example}

\begin{example}
\label{5.6}
We give a $2$-dimensional almost Gorenstein* simplicial complex with $\eta_1 \ne 0$.
Let
$$
\Sigma= \langle 134,125,235,345,126,136,236,146,156,456 \rangle.
$$
(See Figure 3.)
Then $\Sigma$ is uniformly CM and $\widetilde H_2(\Sigma) \cong K$.
Thus $\Sigma$ is almost Gorenstein* by Proposition \ref{4.8}.
Since $h(\Sigma)=(1,3,5,1)$, we have $\eta_\Sigma(t)=2t$.
\begin{center}
\includegraphics[width=40mm]{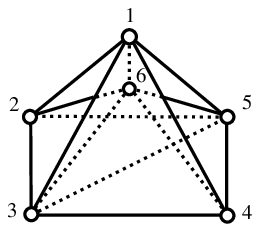}\\
Figure 3: The simplicial complex $\Sigma$
\end{center}
\end{example}

Below, we give more specified questions relating to Problem \ref{5.1}.
In the above example, we have $\delta(\Sigma)=2$.
Since $\delta(\Delta)$ is even if $\Delta$ is a ridge sum of Gorenstein* simplicial complexes, we suggest the following question.

\begin{question}
\label{q1}
Is $\delta(\Delta)$ even for any almost Gorenstein* simplicial complex $\Delta$?
\end{question}

The next question will be the first step to study Problem \ref{5.1}.

\begin{question}
\label{q2}
For all non-negative integers $i$ and $d$ with $i < \frac d 2$,
is there a $(d-1)$-dimensional almost Gorenstein* simplicial complex $\Delta$
with $\eta_\Delta(t)=t^i+t^{d-1-i}$?
\end{question}

Note that Question \ref{q1} is obvious in odd dimensions since $\eta_\Delta(t)$ is symmetric.
But the question is open even for $2$-dimensional simplicial complexes.
We think that Question \ref{q2} for $i=1$ would not be hard, but for $i \geq 2$ the question seems not to be easy.
Indeed, we do not have an example of an almost Gorenstein* simplicial complex of dimension $\geq 4$ with $\eta_2 \ne 0$.

Finally we give a few examples which show that the almost Gorenstein* property is combinatorially not a good property.

\begin{example}
The almost Gorenstein* property is not a topological property.
For example,
$$\Delta= \langle 123,124,134,234,345,346,356,456 \rangle$$
is almost Gorenstein* since it is a ridge sum of two boundaries of a simplex.
On the other hand,
$$\Delta'= \langle 123,124,137,147,237,247,357,457,367,467,356,456 \rangle$$
is a subdivision of $\Delta$ (subdivide the edge $\{3,4\}$ by a new vertex $7$)
but is not almost Gorenstein*
since it is not a ridge sum although $\dim_K \widetilde H_2(\Delta')=2 >1$.
\end{example}

\begin{example}
The link of an almost Gorenstein* simplicial complex may not be almost Gorenstein*.
Let
$$\Delta=
\langle 134,125,235,345,127,267,137,367,236,147,467,157,567,456 \rangle.$$
Then $\Delta$ is a subdivision of $\Sigma$ in Example \ref{5.6} (subdivide the edge $\{1,6\}$ by a new vertex $7$),
and is almost Gorenstein* by Proposition \ref{4.8}.
However,
$$\lk_\Delta(7)=\langle 12,26,13,36,14,46,15,56 \rangle$$
is not almost Gorenstein* by Proposition \ref{4.7}.
\end{example}

\end{document}